\newtheorem{theorem}{Theorem}[section]
\newtheorem{lemma}{Lemma}[section]
\newcommand{\ignore}[1]{}
\title{
Concentration bounds for two time scale stochastic approximation
}
\author{Vivek S. Borkar and Sarath Pattathil % <-this % stops a space
\thanks{V.S. Borkar is with the Department of Electrical Engineering, IIT Bombay,
Powai, Mumbai 400076, India {\tt borkar.vs@gmail.com}. Work supported in part by a J.\ C.\ Bose Fellowship and a CEFIPRA Grant for `Machine Learning for Network Analytics'.}%
\thanks{S. Pattathil is with the Department of Electrical Engineering, IIT Bombay,
Powai, Mumbai 400076, India {\tt sarathpattathil@iitb.ac.in}}%
}
\begin{document}

\maketitle
\thispagestyle{empty}
\pagestyle{empty}

%%%%%%%%%%%%%%%%%%%%%%%%%%%%%%%%%%%%%%%%%%%%%%%%%%%%%%%%%%%%%%%%%%%%%%%%%%%%%%%%
\begin{abstract}
Viewing a two time scale stochastic approximation scheme as a noisy discretization of a singularly perturbed differential equation, we obtain a concentration bound for its iterates that captures its behavior with quantifiable high probability. This uses Alekseev's nonlinear variation of constants formula and a martingale concentration inequality, and extends the corresponding results for single time scale stochastic approximation.
\end{abstract}

\section{Introduction}

Consider the two time scale stochastic approximation:
\begin{align}
\label{eq:stoc_approx_x}
x_{k+1} &= x_{k} + a_k \big( h(x_k,y_k) + M_{k+1}^{(1)} \big), \\
\label{eq:stoc_approx_y}
y_{k+1} &= y_{k} + b_k \big( g(x_k,y_k) + M_{k+1}^{(2)} \big), 
\end{align}
where $\{a_n\}, \{b_n\} \subset (0, 1)$ are stepsizes satisfying\footnote{As in \cite{gugan}, we can relax the second of these conditions to $a_n, b_n \to 0$. We do not discuss this here.}
\begin{align}
\label{eq:two_timescales}
\sum_na_n = \sum_nb_n = \infty,  \  \sum_n(a_n^2 + b_n^2) < \infty,  \  \frac{b_n}{a_n} \xrightarrow[]{n \uparrow \infty} 0. 
\end{align}
For simplicity, we assume $0 < b_n \leq a_n < 1$ $\forall \: n$.
These are expected to track the singularly perturbed ordinary differential equation (ODE)
\begin{align}
\dot{x}(t) = h(x(t),y(t)),  \label{odex} \\
\dot{y}(t) = \epsilon g(x(t),y(t)),  \label{odey}
\end{align}
where $0 < \epsilon \downarrow 0$. Assume that for  fixed $y$, the ODE 
\begin{align}
\dot{\tilde{x}}(t) = h(\tilde{x}(t), y) \label{odex1}
\end{align}
has a globally asymptotically stable equilibrium $\lambda(y)$ and 
\begin{align}
\dot{\tilde{y}}(t) = \epsilon g(\lambda(\tilde{y}(t)), \tilde{y}(t)) \label{odey1}
\end{align}
has a globally asymptotically stable equilibrium $y^*$.  The intuition behind (\ref{eq:stoc_approx_x})-(\ref{eq:stoc_approx_y}) is as follows. Interpretating them as noisy Euler scheme for (\ref{odex})-(\ref{odey}) (see, e.g., \cite{book}), $a_k, b_k$ can be viewed as discrete time steps. Then  the last condition in (\ref{eq:two_timescales}) induces a time scale separation whereby $\{x_k\} $ evolves on a faster time scale compared to $\{y_k\}$, thereby mimicking  (\ref{odex})-(\ref{odey}). The fast time scale sees the slow one as quasi-static, i.e., $y(t) \approx$ a constant $y$, whence $x(t)$ tracks $\lambda(y(t))$. In turn, $y(\cdot)$ approximately follows (\ref{odey1}). Hence we expect a.s.\  convergence  of $(x_k, y_k)$ to $(\lambda(y^*), y^*)$ as $k\uparrow\infty$ (\cite{book}, Chapter 6). 

The above behavior emulates nested iterations where one would perform the $x_k$ iteration till  near-convergence as a subroutine between two updates of $y_k$. The incremental online nature of the two time scale scheme makes it ideal for applications such as reinforcement learning \cite{SPP}, \cite{LeslieCollins}, \cite{KondaBorkar}, \cite{KondaTsitsiklis}. While the convergence analysis sketched above is by now classical \cite{two_time_scale},  the convergence rate and error analysis is lacking except in the linear case \cite{gugan_2}, \cite{konda}. 

The aim of the present work is to provide a concentration result for the two time scale scheme in the spirit of \cite{gugan}, which does so for the single time scale case. This may be viewed as a step towards the aforementioned objective. We refer to \textit{ibid.} for details that are common, focusing only on the points of departure. We make the following assumptions throughout:
\begin{itemize}
\item $h(\cdot) : \mathbb{R}^d\times \mathbb{R}^s \mapsto \mathbb{R}^d$, $g(\cdot) : \mathbb{R}^d\times\mathbb{R}^s \mapsto \mathbb{R}^s$, $\lambda(\cdot) : \mathbb{R}^s \mapsto \mathbb{R}^d$ are Lipschitz with Lipschitz constants $L_h, L_g, L_\lambda$ respectively, $\|g(\cdot)\| \leq B_g < \infty$.

\item $M_{n+1}^{(i)}, i = 1,2,$ are martingale difference sequences with respect to the increasing $\sigma-$fields
\begin{align*}
\mathcal{F}_n := \sigma(x_m, y_m, M_m^{(i)}; i = 1,2; m \leq n), \ n \geq 0. 
\end{align*}
That is,  $\mathbb{E}[M_{n+1}^{(i)} | \mathcal{F}_n] = 0$ a.s.\ $\forall \  i = 1,2; \ n \geq 0.$
Furthermore, $\exists \ c_1, c_2: \mathbb{R}^d \rightarrow (0, \infty), u_L > 0$, such that $\forall i =  1,2; n \geq 0; u > u_L;$, 
\begin{align*}
\mathbb{P}\{ ||M_{n+1}^{(i)} || > u | \mathcal{F}_n\} \leq c_1(x_n) e^{-c_2(x_n) u}.
\end{align*}
\end{itemize}

We also assume \textbf{A4} from \cite{gugan}:  There exist a Lyapunov function $V \in C^1(\mathbb{R}^s)$ with $\lim_{\|y\|\uparrow\infty}V(y) = \infty$, $\langle\nabla V(y), g(\lambda(y), y)\rangle < 0$ for $y \neq y^*$, and $r > r_0 > 0, \epsilon_0 > 0$ such that for $\epsilon < \epsilon_0$,
\begin{eqnarray*}
\lefteqn{\{ y \in \mathbb{R}^s: ||y - y^*|| \leq \epsilon \} } \\
&& \subseteq V^{r_0}  := \{ y \in \text{dom} (V) : V(y) \leq r_0\} \\
&&\subset \mathcal{N}_{\epsilon_0}(V^{r_0}) \subseteq V^r \subset \text{dom} (V), 
\end{eqnarray*}
where $V^{r}$ is defined similarly to $V^{r_0}$ and
\begin{align*}
\mathcal{N}_{\epsilon_0}(V^{r_0}) := \{ y \in \mathbb{R}^d: \exists y' \in V^{r_0}  \ \text{ s.t } ||y' - y|| \leq \epsilon_0 \}.
\end{align*}
We make an analogous assumption for each fixed $y$ and for the equilibrium $\lambda(y)$ of (\ref{odex1}), with $x, \lambda(y)$ replacing $y, y^*$ in the above. We use the common notation $V(\cdot)$ for the Liapunov function of both cases, suppressing the $y$-dependence in the latter. Note that this `assumption'  is in fact guaranteed by the converse Lyapunov theorem \cite{Krasovskii}.

\section{Alekseev's Formula}

Alekseev's formula \cite{alekseev} is a nonlinear variation of constants formula for nonlinear ODE.  We give a slightly more general form from \cite{aseem} that allows for differing initial conditions.
\begin{theorem}
Consider a differential equation 
\begin{align}
\dot{u}(t) = f(t, u(t)), \: t \geq 0, \nonumber
\end{align}
and its perturbation
\begin{align}
\dot{p}(t) = f(t, p(t)) + g(t, p(t)), \: t\geq 0, \nonumber
\end{align}
where $f, g: \mathbb{R} \times \mathbb{R} \rightarrow \mathbb{R}^d$, $f \in C^1(\mathbb{R}^d), g \in C(\mathbb{R}^d)$.  Let $u(t, t_0, p_0)$ and $p(t, t_0, p_0)$ denote respectively the solutions to the above nonlinear systems for $t \geq t_0$ satisfying $p(t_0, t_0, p_0) = p_0, u(t_0, t_0, p_0) = u_0$. Then
\begin{align}
p(t, t_0, &p_0) = u(t, t_0, u_0) + \Phi(t, t_0, p_0)(p_0-u_0) \nonumber \\
& + \int_{t_0}^{t} \Phi(t, s, p(s, t_0, p_0))g(s, p(s, t_0, p_0))ds, \: \: t\geq t_0. \nonumber 
\end{align} 
Here $\Phi(t,s,u_0)$ for $t \geq s, u_0 \in \mathbb{R}^d$, is the fundamental matrix of the linear system 
\begin{align*}
\dot{v}(t) = \frac{\partial f}{\partial u} (t, u(t, s, u_0))v(t), \: \: t \geq s, 
\end{align*}
with $\Phi(s,s,u_0) = \mathcal{I}_d :=$ the $d-$dimensional identity matrix. \\

\end{theorem}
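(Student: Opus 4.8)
\emph{Proof idea.} Write $u(t,s,\xi)$ for the unperturbed solution that passes through $\xi$ at time $s$, and $p(\cdot)=p(\cdot,t_0,p_0)$ for the perturbed solution. The plan is the classical Alekseev argument: reduce the identity to the fundamental theorem of calculus for a single comparison function. Fix $t\ge t_0$ and set
\[
\psi(s):=u\big(t,s,p(s)\big),\qquad s\in[t_0,t],
\]
the unperturbed flow carried from time $s$ to time $t$, but launched from the \emph{perturbed} state $p(s)$. At the endpoints $\psi(t)=u(t,t,p(t))=p(t)$ and $\psi(t_0)=u(t,t_0,p_0)$, so $p(t)=\psi(t_0)+\int_{t_0}^{t}\psi'(s)\,ds$, and the whole proof is the computation of $\psi'$.

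That computation rests on two standard regularity properties of the unperturbed flow, both valid because $f\in C^1$ (together with whatever growth condition secures global existence): (i) $\xi\mapsto u(t,s,\xi)$ is $C^1$ with Jacobian $\Phi(t,s,\xi)$, the fundamental matrix of the variational equation along $u(\cdot,s,\xi)$; and (ii) $\partial_s u(t,s,\xi)=-\Phi(t,s,\xi)\,f(s,\xi)$, obtained by differentiating the flow identity $u\big(t,\sigma,u(\sigma,s,\xi)\big)=u(t,s,\xi)$ in $\sigma$ at $\sigma=s$ and using $\partial_\sigma u(\sigma,s,\xi)|_{\sigma=s}=f(s,\xi)$. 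Since $\dot p(s)=f(s,p(s))+g(s,p(s))$, the chain rule applied to $\psi(s)=u(t,s,p(s))$ gives
\begin{align*}
\psi'(s)&=-\Phi(t,s,p(s))\,f(s,p(s))\\
&\quad{}+\Phi(t,s,p(s))\big(f(s,p(s))+g(s,p(s))\big)\\
&=\Phi(t,s,p(s))\,g(s,p(s)),
\end{align*}
the $f$-terms cancelling --- which is the point of the formula. Integrating gives $p(t)=u(t,t_0,p_0)+\int_{t_0}^{t}\Phi(t,s,p(s))\,g(s,p(s))\,ds$, i.e.\ the stated identity when $p_0=u_0$.

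It remains to rewrite $u(t,t_0,p_0)$ as $u(t,t_0,u_0)$ plus a correction. As both are unperturbed solutions differing only in their initial data, $u(t,t_0,p_0)-u(t,t_0,u_0)=\big(\int_0^1 \partial_\xi u(t,t_0,u_0+\tau(p_0-u_0))\,d\tau\big)(p_0-u_0)$; abbreviating the averaged Jacobian --- equivalently, the fundamental matrix of the linear ODE satisfied by this difference --- as $\Phi(t,t_0,p_0)$ produces the $\Phi(t,t_0,p_0)(p_0-u_0)$ term and finishes the proof. The hard part is not the bookkeeping but properties (i)--(ii): smooth dependence of flows on the initial state and the initial time is the one genuinely nontrivial input, and once it is granted the formula is immediate. (In the concentration bound to follow, the relevant ODE and the interpolated iterate will share an initial point, so $p_0=u_0$ and the correction term is carried only for generality.)
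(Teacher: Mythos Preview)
The paper does not prove this theorem; it is quoted from \cite{alekseev}, with the present generalization to $p_0\neq u_0$ taken from \cite{aseem}. There is therefore no in-paper argument to compare against. Your derivation is the standard one and is correct: setting $\psi(s)=u(t,s,p(s))$, invoking the flow identities $\partial_\xi u(t,s,\xi)=\Phi(t,s,\xi)$ and $\partial_s u(t,s,\xi)=-\Phi(t,s,\xi)f(s,\xi)$, and integrating is exactly how Alekseev's formula is obtained.

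Two small corrections. First, your handling of the initial-condition mismatch is slightly loose: the averaged Jacobian $\int_0^1 \partial_\xi u(t,t_0,u_0+\tau(p_0-u_0))\,d\tau$ is not literally the object the statement calls $\Phi(t,t_0,p_0)$, which by the given definition is the variational matrix along the single trajectory through $p_0$. What is true is that the difference $w(t)=u(t,t_0,p_0)-u(t,t_0,u_0)$ obeys a linear ODE whose fundamental matrix enjoys the same exponential bound $\|{\cdot}\|\le Ke^{-\kappa(t-s)}$ used later, so the identification is harmless for the application but worth flagging. Second, your closing parenthetical is wrong: in the paper's use of the formula the interpolated iterate $\overline{x}(\tilde t_{n_0})=x_{n_0}$ (and likewise $\overline{z}(\tilde t_{n_0})$, $\overline{y}(\hat t_{n_0})$) is compared against the limiting trajectory $x(\tilde t_{n_0})=\lambda(y)$ (respectively $y^*$), so $p_0\neq u_0$ and the correction term is genuinely needed --- it is precisely what produces the $e^{-\kappa_x(\tilde t_n-\tilde t_{n_0})}H_{n_0}$ and $e^{-\kappa_y(\hat t_n-\hat t_{n_0})}\|\overline{y}(\hat t_{n_0})-y(\hat t_{n_0})\|$ contributions in the error bounds.
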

 
\section{Error Bounds}
In what follows, $K \in (0, \infty)$ will denote a generic constant depending on the context.  Let $z_k = \lambda(y_k)$, i.e., $h(z_k, y_k) = 0, \ k \geq 0$.
Let $\nabla \lambda :=$ the Jacobian matrix of $\lambda(\cdot)$. Using Taylor expansion, a `stochastic approximation scheme'  for $\{z_k\}$ can be written as
\begin{align}
z_{k+1} = z_{k} + \nabla \lambda(y_k) (y_{k+1} - y_k) + \zeta_{k+1}. \nonumber
\end{align}
Here $||\zeta_{k+1}|| \leq K_\zeta ||y_{k+1} - y_k ||^2$ is the error term from Taylor expansion. Substituting from (\ref{eq:stoc_approx_y}), we get:
\begin{align}
z_{k+1} &= z_{k} + \nabla \lambda(y_k) \big( b_k g(x_k,y_k) + b_k M_{k+1}^{(2)} \big) + \zeta_{k+1} \nonumber \\
&= z_{k} + a_k h(z_k,y_k) \nonumber \\
&\qquad + \nabla \lambda(y_k) \big( b_k g(x_k,y_k) + b_k M_{k+1}^{(2)} \big) + \zeta_{k+1}, \nonumber
\end{align}
because $h(z_k,y_k) = h(\lambda(y_k),y_k) = 0$.
This leads to:
\begin{align}
z_{k+1} = z_{k} + a_{k} \big( h(z_k,y_k) &+ \epsilon_{k} \nabla \lambda(y_k) M_{k+1}^{(2)} \nonumber \\
&+ \eta_k  \nabla \lambda(y_k) + \varepsilon_{k+1} \big), \nonumber
\end{align}
where
\begin{align}
\epsilon_k = \frac{b_k}{a_k}, \ \eta_k = \epsilon_k g(x_k,y_k),  \
\varepsilon_{k+1} = \frac{1}{a_k} \zeta_{k+1}. \nonumber
\end{align}
We can bound $||\varepsilon_{k+1}||$ as
\begin{eqnarray*}
\lefteqn{||\varepsilon_{k+1}|| = \frac{1}{a_k} ||\zeta_{k+1}|| 
\leq \frac{1}{a_k} K_\zeta ||y_{k+1} - y_k||^2} \\
&=& \epsilon_k b_k K_\zeta ||g(x_k,y_k) + M_{k+1}^{(2)}||^2  \\
&\leq& \epsilon_k b_k K_\zeta \big( B_g^2 + 2 B_g ||M_{k+1}^{(2)}|| + ||M_{k+1}^{(2)}||^2 \big)  \\
&\leq& K\big( \epsilon_k b_k +  \epsilon_k b_k||M_{k+1}^{(2)}|| + \epsilon_k b_k||M_{k+1}^{(2)}||^2 \big). 
\end{eqnarray*}
Consider the coupled iterations:
\begin{align}
\label{eq:stoc_x} 
x_{k+1} &= x_{k} + a_k \big( h(x_k,y_k) + M_{k+1}^{(1)} \big), \\
\label{eq:stoc_z}
z_{k+1} &= z_{k} + a_{k} \big( h(z_k,y_k) + \epsilon_{k} \nabla \lambda(y_k) M_{k+1}^{(2)} \nonumber \\
& \qquad \qquad \qquad \qquad \qquad + \eta_k  \nabla \lambda(y_k) + \varepsilon_{k+1} \big).
\end{align}
As shown below, a suitable interpolation of  (\ref{eq:stoc_x})-(\ref{eq:stoc_z}) can be considered as a perturbation of the differential equations
\begin{align}
\dot{x}(t) = h(x(t),y(t)), \ \dot{y}(t) = 0,
\end{align}
facilitating an application of Alekseev's formula.

\subsection{Deviation bound for  $\{x_n\}$}

Let $\tilde{t}_0 = 0, \tilde{t}_{k+1} = \tilde{t}_k + a_k$ for $k \geq 0$.
Define interpolation $\overline{x}(\cdot)$ of $\{x_n\}$ by: $\overline{x}(\tilde{t}_k) = x_k \ \forall k$ and  for $t \in (\tilde{t}_{k}, \tilde{t}_{k+1})$ 
\begin{align}
\overline{x}(t) = x_k + \frac{t-\tilde{t}_k}{a_k} [ x_{k+1} - x_k ]. \nonumber
\end{align}
Define the event $G_n$ by\footnote{We later use the same notation $G_n$ for the event defined above for the variables $\{y_k\}$ as well. The usage will be clear from the context.} 
\begin{align}
G_n := \{ \overline{x}(t) \in V^r \: \forall \ t \in [\tilde{t}_{n_0}, \tilde{t}_n] \}. \nonumber 
\end{align}
We have
\begin{align}
\overline{x}(\tilde{t}_{n+1}) = \overline{x}(\tilde{t}_{n_0}) + \sum_{k = {n_0}}^{n} a_k h(x_k,y_k) + \sum_{k = {n_0}}^{n} a_k M_{k+1}^{(1)} . \nonumber
\end{align}
Rewrite this equation as
\begin{align}
\overline{x}(t) = \overline{x}(\tilde{t}_{n_0}) + \int_{\tilde{t}_{n_0}}^{t} \left(h(\overline{x}(s),{y}(s))ds 
+ \big( \xi_1(s) + \xi_2(s) \big)\right) ds \nonumber
\end{align}
where for $s \in [\tilde{t}_k,\tilde{t}_{k+1})$,
\begin{align}
\xi_1(s) = h(\overline{x}(\tilde{t}_k),{y}(\tilde{t}_k)) - h(\overline{x}(s),{y}(s)), \ \xi_2(s) = M_{k+1}^{(1)}. \nonumber
\end{align}
Using the generalized  Alekseev's formula above, we have:
\begin{align}
\label{eq:imp_1}
\overline{x}(t) = x(&t) + \Phi_x(t, s, \overline{x}(\tilde{t}_{n_0}), y(\tilde{t}_{n_0}))(\overline{x}(\tilde{t}_{n_0}) - {x}(\tilde{t}_{n_0}))     \nonumber \\
&+ \int_{t_{n_0}}^{t} \Phi_x(t, s, \overline{x}(s), y(s)) \big[ \xi_1(s) + \xi_2(s) \big] ds.
\end{align}
Here $y(t) \equiv y, x(t) \equiv  \lambda(y)$ is a constant  trajectory and 
$\Phi_x(\cdot)$ satisfies the linear system:
\begin{align}
\label{eq:Phi_diff_eq}
\dot{\Phi}_x(t, s,  x_0, y_0) = D(x(t), y(t)) \Phi_x(t, s, x_0, y_0), \: \: t \geq s, 
\end{align}
with initial condition $\Phi_x(t, s, x_0, y_0) = \mathcal{I}$, where $D$ is the Jacobian matrix of $h(\cdot,y)$. 
As shown in Lemma $5.3$, \cite{gugan},  there exist $K, \kappa_x > 0$ so that the following holds for $t \geq s$ and $x_0 \in V^r$:
\begin{align}
|| \Phi_x(t, s, x_0, y_0) || \leq K e^{-\kappa_x (t-s)}. \nonumber
\end{align} 
From Lemma $5.8$ \cite{gugan} and (\ref{eq:imp_1}), we have on $G_n$
\begin{align}
&||\overline{x}(\tilde{t}_n) - x(\tilde{t}_n)|| \leq ||\Phi(\tilde{t}_n, \tilde{t}_{n_0},\overline{x}(\tilde{t}_{n_0}),y)(\overline{x}(\tilde{t}_{n_0}) - x(\tilde{t}_{n_0}))|| \nonumber \\
& + K \bigg[ ||S_n^{(1)}|| + \sup_{{n_0} \leq k \leq n-1} a_k + \sup_{{n_0} \leq k \leq n-1} a_k ||M_{k+1}^{(1)}||^2 \bigg] \nonumber
\end{align}
where
\begin{align}
S_n^{(1)} = \sum_{k={n_0}}^{n-1} \bigg( \int_{\tilde{t}_k}^{\tilde{t}_{k+1}} \Phi_x(\tilde{t}_n, s, \overline{x}(\tilde{t}_k), y(\tilde{t}_k))ds  \bigg) M_{k+1}^{(1)}. \nonumber
\end{align}
This gives the following error bound: on $G_n$,
\begin{align}
&||\overline{x}(\tilde{t}_n) - x(\tilde{t}_{n})|| \leq K \bigg[ e^{-\kappa_x (\tilde{t}_n-\tilde{t}_{n_0})}||\overline{x}(\tilde{t}_{n_0}) - x(\tilde{t}_{n_0})|| \nonumber \\
& + ||S_n^{(1)}|| + \sup_{{n_0} \leq k \leq n-1} a_k + \sup_{{n_0} \leq k \leq n-1} a_k ||M_{k+1}^{(1)}||^2 \bigg]. \nonumber 
\end{align}

\subsection{Deviation bound for  $\{z_n\}$}

Define $\overline{z}(t)$ by: for $t \in (\tilde{t}_{k}, \tilde{t}_{k+1})$ 
\begin{align}
\overline{z}(t) = z_k + \frac{t-\tilde{t}_k}{a_k} [ z_{k+1} - z_k ] \nonumber
\end{align}
where $\overline{z}(\tilde{t}_k) = z_k \ \forall k$.  
We have:
\begin{align}
\overline{z}(t) = z(\tilde{t}_{n_0}) &+ \int_{\tilde{t}_{n_0}}^{t} h(\overline{z}(s),{y}(s))ds \nonumber \\
&+ \int_{\tilde{t}_{n_0}}^{t} \big( \xi_3(s) + \xi_4(s) + \xi_5(s) + \xi_6(s) \big) ds \nonumber
\end{align}
where for $s \in [\tilde{t}_k,\tilde{t}_{k+1})$,
\begin{align*}
\xi_3(s) &= h(\overline{z}(\tilde{t}_k),{y}(\tilde{t}_k)) - h(\overline{z}(s),{y}(s)), \nonumber \\
\xi_4(s) &= \epsilon_{k} \nabla \lambda(y_k) M_{k+1}^{(2)},  
\xi_5(s) = \eta_k  \nabla \lambda(y_k), 
\xi_6(s) = \varepsilon_{k+1}. 
\end{align*}
Using the generalized Alekseev's formula with $x(t) \equiv \lambda(y)$ and $\Phi_x$ as in (\ref{eq:Phi_diff_eq}), we have
\begin{align*}
%&\overline{z}(\tilde{t}_n) = x(t) + \Phi_x(t, \tilde{t}_{n_0}, \overline{z}(\tilde{t}_{n_0}), y(\tilde{t}_{n_0})) (\overline{z}(\tilde{t}_{n_0}) - x(\tilde{t}_{n_0}))  \\
%&+ \int_{\tilde{t}_{n_0}}^{t} \Phi_x(t, s, \overline{z}(s), y(s)) \big[ \xi_3(s) + \xi_4(s) + \xi_5(s) + \xi_6(s) \big] ds \\
&\overline{z}(\tilde{t}_n) = x(\tilde{t}_n) + \Phi_x(\tilde{t}_n, \tilde{t}_{n_0}, \overline{z}(\tilde{t}_{n_0}), y(\tilde{t}_{n_0})) (\overline{z}(\tilde{t}_{n_0}) - x(\tilde{t}_{n_0})) \\
&+ A_n + B_n+ C_n + D_n, 
\end{align*}
where
\begin{align}
A_n &= \sum_{k={n_0}}^{n-1} \int_{\tilde{t}_k}^{\tilde{t}_{k+1}} \Phi_x(\tilde{t}_n, s, \overline{z}(s), y(s)) \big[ h(\overline{z}(\tilde{t}_k),{y}(\tilde{t}_k)) \nonumber \\
& \qquad \qquad \qquad \qquad \qquad \qquad \qquad - h(\overline{z}(s),{y}(s)) \big]ds, \nonumber \\
B_n &= \sum_{k={n_0}}^{n-1} \int_{\tilde{t}_k}^{\tilde{t}_{k+1}} \Phi_x(\tilde{t}_n, s, \overline{z}(s), y(s)) \epsilon_{k} \nabla \lambda(y_k) M_{k+1}^{(2)}ds, \nonumber \\
C_n &= \sum_{k={n_0}}^{n-1} \int_{\tilde{t}_k}^{\tilde{t}_{k+1}} \Phi_x(\tilde{t}_n, s, \overline{z}(s), y(s)) \eta_k  \nabla \lambda(y_k) ds, \nonumber \\
D_n &= \sum_{k={n_0}}^{n-1} \int_{\tilde{t}_k}^{\tilde{t}_{k+1}} \Phi_x(\tilde{t}_n, s, \overline{z}(s), y(s)) \varepsilon_{k+1} ds. \nonumber
\end{align}
As in  the previous subsection, we have
\begin{align}
||\Phi_x(\tilde{t}_n, \tilde{t}_{n_0}, \overline{z}(\tilde{t}_{n_0})&, y(\tilde{t}_{n_0})) (\overline{z}(\tilde{t}_{n_0}) - x(\tilde{t}_{n_0}))|| \nonumber \\
&\leq e^{-\kappa_x (\tilde{t}_n-\tilde{t}_{n_0})}||\overline{z}(\tilde{t}_{n_0}) - x(\tilde{t}_{n_0})||. \nonumber
\end{align}
We bound other terms through a sequence of lemmas. \\
\begin{lemma}
\label{lemma:z_A_n_prereq}
Let $k,n$ with $n_0 \leq k \leq k+1 \leq n$ be arbitrary. Then on $G_n$,
\begin{align}
&\int_{\tilde{t}_k}^{\tilde{t}_{k+1}} e^{-\kappa_x(\tilde{t}_n-s)}||\overline{z}(s) - \overline{z}(\tilde{t}_k)|| \leq K \bigg(\epsilon_k + \epsilon_k ||M_{k+1}^{(2)}||   \nonumber \\
&+ \epsilon_k b_k + \epsilon_k b_k ||M_{k+1}^{(2)}|| + \epsilon_k b_k ||M_{k+1}^{(2)}||^2 \bigg) e^{-\kappa_x(\tilde{t}_n-\tilde{t}_{k+1})}a_k^2. \nonumber
\end{align}
\end{lemma}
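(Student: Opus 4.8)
The plan is to collapse the integral to an elementary estimate on the one-step increment $z_{k+1}-z_k$. For $s\in[\tilde t_k,\tilde t_{k+1})$ the definition of the interpolation gives $\overline z(s)-\overline z(\tilde t_k)=\frac{s-\tilde t_k}{a_k}\,(z_{k+1}-z_k)$, so $\|\overline z(s)-\overline z(\tilde t_k)\|=\frac{s-\tilde t_k}{a_k}\,\|z_{k+1}-z_k\|$ and the integrand factorizes. On $[\tilde t_k,\tilde t_{k+1}]$ we have $\tilde t_n-s\ge\tilde t_n-\tilde t_{k+1}\ge 0$, hence $e^{-\kappa_x(\tilde t_n-s)}\le e^{-\kappa_x(\tilde t_n-\tilde t_{k+1})}$; pulling this factor out and using $\int_{\tilde t_k}^{\tilde t_{k+1}}\frac{s-\tilde t_k}{a_k}\,ds=\frac{a_k}{2}$, the left-hand side is at most $\frac12\,a_k\,e^{-\kappa_x(\tilde t_n-\tilde t_{k+1})}\,\|z_{k+1}-z_k\|$. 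It then suffices to show $\|z_{k+1}-z_k\|\le K a_k\big(\epsilon_k+\epsilon_k\|M_{k+1}^{(2)}\|+\epsilon_k b_k+\epsilon_k b_k\|M_{k+1}^{(2)}\|+\epsilon_k b_k\|M_{k+1}^{(2)}\|^2\big)$, which produces exactly the claimed bound (one power of $a_k$ from the increment, one from the interval length).

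For the increment, recall $z_k=\lambda(y_k)$, so $h(z_k,y_k)=h(\lambda(y_k),y_k)=0$ and the recursion (\ref{eq:stoc_z}) reduces to $z_{k+1}-z_k=a_k\big(\epsilon_k\nabla\lambda(y_k)M_{k+1}^{(2)}+\eta_k\nabla\lambda(y_k)+\varepsilon_{k+1}\big)$. Since $\lambda$ is globally Lipschitz, $\|\nabla\lambda(\cdot)\|\le L_\lambda$; since $\|g(\cdot)\|\le B_g$, $\|\eta_k\|=\epsilon_k\|g(x_k,y_k)\|\le \epsilon_k B_g$; and the bound on $\|\varepsilon_{k+1}\|$ already derived in this section gives $\|\varepsilon_{k+1}\|\le K\big(\epsilon_k b_k+\epsilon_k b_k\|M_{k+1}^{(2)}\|+\epsilon_k b_k\|M_{k+1}^{(2)}\|^2\big)$. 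Adding the three contributions and absorbing $L_\lambda,B_g$ into $K$ yields the required estimate on $\|z_{k+1}-z_k\|$. The restriction to $G_n$ enters only to keep the iterates in the compact set $V^r$ so that the Taylor-remainder constant $K_\zeta$ (hence the constant in the $\varepsilon_{k+1}$ bound) is uniform; it plays no further role.

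I do not expect a genuine obstacle here: the lemma is a bookkeeping step used to feed $A_n$ into the subsequent estimates. The only point needing care is orienting the exponential weight correctly — maximizing $e^{-\kappa_x(\tilde t_n-s)}$ over the subinterval gives the factor $e^{-\kappa_x(\tilde t_n-\tilde t_{k+1})}$ that must appear on the right — and then keeping track of the two powers of $a_k$ so that the final bound carries $a_k^2$ rather than $a_k$.
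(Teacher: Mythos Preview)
Your proposal is correct and follows essentially the same route as the paper: both factor the interpolation increment as $\frac{s-\tilde t_k}{a_k}\|z_{k+1}-z_k\|$, bound the one-step increment via the Lipschitz constant of $\lambda$ and the already-established estimate on the Taylor remainder (equivalently, on $\varepsilon_{k+1}$), and then use $\int_{\tilde t_k}^{\tilde t_{k+1}}(s-\tilde t_k)e^{-\kappa_x(\tilde t_n-s)}\,ds\le e^{-\kappa_x(\tilde t_n-\tilde t_{k+1})}a_k^2$. The only cosmetic difference is that you invoke the recursion \eqref{eq:stoc_z} directly (with $h(z_k,y_k)=0$) while the paper re-expands via $\|y_{k+1}-y_k\|$ and $\|y_{k+1}-y_k\|^2$; the two are algebraically identical.
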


\begin{proof}
We have:
\begin{align}
||&\overline{z}(s) - \overline{z}(\tilde{t}_k)|| = \frac{(s-\tilde{t}_k)}{a_k} ||\overline{z}(\tilde{t}_{k+1}) - \overline{z}(\tilde{t}_k)|| \nonumber \\
&= \frac{(s-\tilde{t}_k)}{a_k} ||\nabla \lambda(y_k) (y_{k+1} - y_k) + \zeta_{k+1}|| \nonumber \\
&\leq \frac{(s-\tilde{t}_k)}{a_k} (L_\lambda ||y_{k+1} - y_k|| + K_\zeta||y_{k+1} - y_k||^2) \nonumber \\
&\leq K \frac{(s-\tilde{t}_k)}{a_k} (||y_{k+1} - y_k|| + ||y_{k+1} - y_k||^2) \nonumber \\
&\leq K (s-\tilde{t}_k) (\epsilon_k B_g + \epsilon_k ||M_{k+1}^{(2)}|| + \epsilon_k b_k B_g^2 \nonumber \\
& \qquad \qquad \qquad \qquad +2\epsilon_k b_k B_g ||M_{k+1}^{(2)}|| + \epsilon_k b_k ||M_{k+1}^{(2)}||^2) \nonumber \\
&\leq K (s-\tilde{t}_k) (\epsilon_k + \epsilon_k ||M_{k+1}^{(2)}|| + \epsilon_k b_k + \epsilon_k b_k ||M_{k+1}^{(2)}|| \nonumber \\
&\qquad \qquad \qquad \qquad  + \epsilon_k b_k ||M_{k+1}^{(2)}||^2). 
\label{eq:z_A_L1_1}
\end{align}
The result now follows from (\ref{eq:z_A_L1_1}) and
\begin{align*}
\int_{\tilde{t}_k}^{\tilde{t}_{k+1}}(s-\tilde{t}_k) e^{-\kappa_x(\tilde{t}_n-s)}ds \leq e^{-\kappa_x(\tilde{t}_n-\tilde{t}_{k+1})}a_k^2.
\end{align*}
\end{proof}

\begin{lemma}
Let $n \geq n_0$ be arbitrary. Then on $G_n$,
\begin{align}
||A_n|| \leq& K \big[ \sup_{{n_0} \leq k \leq n-1} b_k + \sup_{{n_0} \leq k \leq n-1} b_k ||M_{k+1}^{(2)}||  \nonumber \\
&+ \sup_{{n_0} \leq k \leq n-1} b_k^2 +  \sup_{{n_0} \leq k \leq n-1} b_k^2||M_{k+1}^{(2)}|| \nonumber \\
&+ \sup_{{n_0} \leq k \leq n-1} b_k^2||M_{k+1}^{(2)}||^2 \big]. \nonumber
\end{align}
\end{lemma}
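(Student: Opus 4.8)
\emph{Proof strategy.} The plan is to bound $\|A_n\|$ termwise, using the exponential decay of $\Phi_x$ together with the Lipschitz continuity of $h$ and Lemma~\ref{lemma:z_A_n_prereq}, and then to collapse the resulting sum by a geometric-series estimate in the step sizes. First, on $G_n$ the relevant trajectory $\overline{z}(\cdot)$ remains in $V^r$, so, as established in the preceding subsection, $\|\Phi_x(\tilde{t}_n, s, \overline{z}(s), y(s))\| \le K e^{-\kappa_x(\tilde{t}_n - s)}$ for all $s \in [\tilde{t}_{n_0}, \tilde{t}_n]$. Since the limiting trajectory has $y(\cdot) \equiv y$ constant, the bracketed difference in the definition of $A_n$ is $h(\overline{z}(\tilde{t}_k), y) - h(\overline{z}(s), y)$, of norm at most $L_h \|\overline{z}(s) - \overline{z}(\tilde{t}_k)\|$. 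Hence, on $G_n$,
\[
\|A_n\| \le K L_h \sum_{k=n_0}^{n-1} \int_{\tilde{t}_k}^{\tilde{t}_{k+1}} e^{-\kappa_x(\tilde{t}_n - s)}\, \|\overline{z}(s) - \overline{z}(\tilde{t}_k)\|\, ds .
\]

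Next I would apply Lemma~\ref{lemma:z_A_n_prereq} to each of the $n - n_0$ integrals in this sum, obtaining, on $G_n$,
\[
\|A_n\| \le K \sum_{k=n_0}^{n-1} \big( \epsilon_k + \epsilon_k\|M_{k+1}^{(2)}\| + \epsilon_k b_k + \epsilon_k b_k\|M_{k+1}^{(2)}\| + \epsilon_k b_k\|M_{k+1}^{(2)}\|^2 \big)\, e^{-\kappa_x(\tilde{t}_n - \tilde{t}_{k+1})}\, a_k^2 .
\]
The one point requiring care is the bookkeeping of step sizes: since $\epsilon_k = b_k/a_k$, one has $\epsilon_k a_k^2 = a_k b_k$ and $\epsilon_k b_k a_k^2 = a_k b_k^2$, so every summand has the form $a_k\, c_k\, e^{-\kappa_x(\tilde{t}_n - \tilde{t}_{k+1})}$ with $c_k$ equal to one of $b_k$, $b_k\|M_{k+1}^{(2)}\|$, $b_k^2$, $b_k^2\|M_{k+1}^{(2)}\|$, $b_k^2\|M_{k+1}^{(2)}\|^2$. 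Bounding each $c_k$ by its supremum over $n_0 \le k \le n-1$ and pulling the five suprema outside the sum gives
\[
\|A_n\| \le K \Big[ \sup_{n_0 \le k \le n-1} b_k + \sup_{n_0 \le k \le n-1} b_k\|M_{k+1}^{(2)}\| + \sup_{n_0 \le k \le n-1} b_k^2 + \sup_{n_0 \le k \le n-1} b_k^2\|M_{k+1}^{(2)}\| + \sup_{n_0 \le k \le n-1} b_k^2\|M_{k+1}^{(2)}\|^2 \Big] \sum_{k=n_0}^{n-1} a_k e^{-\kappa_x(\tilde{t}_n - \tilde{t}_{k+1})} .
\]

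Finally I would bound the residual factor $\sum_{k=n_0}^{n-1} a_k e^{-\kappa_x(\tilde{t}_n - \tilde{t}_{k+1})}$ by a constant independent of $n$: writing $\tilde{t}_{k+1} = \tilde{t}_k + a_k$ and using $0 < a_k < 1$, we have $a_k e^{-\kappa_x(\tilde{t}_n - \tilde{t}_{k+1})} = a_k e^{\kappa_x a_k} e^{-\kappa_x(\tilde{t}_n - \tilde{t}_k)} \le e^{\kappa_x}\, a_k e^{-\kappa_x(\tilde{t}_n - \tilde{t}_k)} \le e^{\kappa_x} \int_{\tilde{t}_k}^{\tilde{t}_{k+1}} e^{-\kappa_x(\tilde{t}_n - s)}\, ds$, the last step because $e^{-\kappa_x(\tilde{t}_n - s)} \ge e^{-\kappa_x(\tilde{t}_n - \tilde{t}_k)}$ for $s \in [\tilde{t}_k, \tilde{t}_{k+1}]$; summing over $k$ then gives $\sum_{k=n_0}^{n-1} a_k e^{-\kappa_x(\tilde{t}_n - \tilde{t}_{k+1})} \le e^{\kappa_x} \int_{\tilde{t}_{n_0}}^{\tilde{t}_n} e^{-\kappa_x(\tilde{t}_n - s)}\, ds \le e^{\kappa_x}/\kappa_x$. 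Absorbing this constant into $K$ yields the asserted bound. I do not expect any genuine obstacle here — the argument is essentially a mechanical combination of Lemma~\ref{lemma:z_A_n_prereq}, the decay bound for $\Phi_x$, and a geometric sum — the only subtlety, worth flagging, being to keep exactly one factor $a_k$ per term (the remaining $a_k$ combining with $\epsilon_k$ to produce the $b_k$ and $b_k^2$ coefficients in the statement) so that the residual sum over $k$ stays bounded uniformly in $n$.
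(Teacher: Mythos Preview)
Your proposal is correct and follows essentially the same argument as the paper: bound $\|\Phi_x\|$ by $Ke^{-\kappa_x(\tilde t_n-s)}$, use the Lipschitz property of $h$ to reduce to $\|\overline{z}(s)-\overline{z}(\tilde t_k)\|$, invoke Lemma~\ref{lemma:z_A_n_prereq}, convert $\epsilon_k a_k^2$ and $\epsilon_k b_k a_k^2$ into $a_k b_k$ and $a_k b_k^2$, pull out the suprema, and finish with the bound $\sum_{k=n_0}^{n-1} a_k e^{-\kappa_x(\tilde t_n-\tilde t_{k+1})}\le e^{\kappa_x}/\kappa_x$. The only cosmetic difference is that you spell out the comparison-to-integral step for the residual sum in a bit more detail than the paper does.
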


\begin{proof}
The proof mimics that of Lemma $5.6$, \cite{gugan}. Thus,
\begin{align}
||A_n|| \leq& \sum_{k={n_0}}^{n-1} \int_{\tilde{t}_k}^{\tilde{t}_{k+1}} ||\Phi_x(\tilde{t}_n, s, \overline{z}(s), y(s))|| \times \nonumber \\
& \qquad \qquad \qquad || h(\overline{z}(\tilde{t}_k),{y}(\tilde{t}_k)) - h(\overline{z}(s),{y}(s)) ||ds \nonumber \\
\leq& L_h\sum_{k={n_0}}^{n-1}\int_{\tilde{t}_k}^{\tilde{t}_{k+1}}|| \Phi_x(\tilde{t}_n, s, \overline{z}(s), y(s))|| \times \nonumber \\
&\qquad \qquad \qquad \qquad \qquad \qquad || \overline{z}(\tilde{t}_k) - \overline{z}(s) ||ds \nonumber \\
\leq& K \sum_{k={n_0}}^{n-1} \int_{\tilde{t}_k}^{\tilde{t}_{k+1}} e^{-\kappa_x (\tilde{t}_n-s)} ||\overline{z}(\tilde{t}_k) - \overline{z}(s)||ds \nonumber \\
\leq& K \sum_{k={n_0}}^{n-1} \bigg(\epsilon_k + \epsilon_k ||M_{k+1}^{(2)}|| + \epsilon_k b_k + \epsilon_k b_k ||M_{k+1}^{(2)}|| \nonumber 
\end{align}
\begin{align}
&+ \epsilon_k b_k ||M_{k+1}^{(2)}||^2 \bigg) e^{-\kappa_x(\tilde{t}_n-\tilde{t}_{k+1})}a_k^2 \nonumber \\
\leq & K \big[ \sup_{{n_0} \leq k \leq n-1} b_k + \sup_{{n_0} \leq k \leq n-1} b_k ||M_{k+1}^{(2)}||  \nonumber \\
&+ \sup_{{n_0} \leq k \leq n-1} b_k^2 +  \sup_{{n_0} \leq k \leq n-1} b_k^2||M_{k+1}^{(2)}||  \nonumber \\
&+ \sup_{{n_0} \leq k \leq n-1} b_k^2||M_{k+1}^{(2)}||^2 \big] \times  \sum_{k={n_0}}^{n-1} e^{-\kappa_x(\tilde{t}_n-\tilde{t}_{k+1})}a_k. \nonumber
\end{align}
The claim follows on observing that (since $a_k < 1$)
\begin{align}
\sum_{k={n_0}}^{n-1} e^{-\kappa_x(\tilde{t}_n-\tilde{t}_{k+1})}a_k &\leq  e^{\kappa_x}\int_{\tilde{t}_{n_0}}^{\tilde{t}_n }e^{-\kappa_x(\tilde{t}_n-s)}ds \leq \frac{e^{\kappa_x}}{\kappa_x}. \nonumber
\end{align}
\end{proof}

\begin{lemma}
Let $n \geq n_0$ be arbitrary. Then on $G_n$,
\begin{align}
||B_n|| \leq K \big[ \sup_{{n_0} \leq k \leq n-1} \epsilon_k || M_{k+1}^{(2)} || \big]. \nonumber
\end{align}
\end{lemma}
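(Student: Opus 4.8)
The plan is to follow the same route used for the other linear noise/perturbation terms above (and for the corresponding estimates in \cite{gugan}), the only structural simplification being that the integrand defining $B_n$ is \emph{constant} in $s$ on each subinterval $[\tilde{t}_k,\tilde{t}_{k+1})$. Hence integration over such a subinterval produces a single factor $a_k$ rather than the $a_k^2$ that appeared in Lemma~\ref{lemma:z_A_n_prereq} and in the bound on $\|A_n\|$ (where the extra $a_k$ came from $\|\overline{z}(s)-\overline{z}(\tilde{t}_k)\| = O(a_k)$). That single $a_k$ is exactly what is needed to turn the exponentially weighted stepsize sum into a convergent geometric-type sum, with no room left over — which is why the final bound carries no stepsize factor and is stated purely in terms of $\sup_k \epsilon_k\|M_{k+1}^{(2)}\|$.

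Concretely, I would first apply the triangle inequality term by term and pull norms inside each integral, using on $G_n$ the exponential decay $\|\Phi_x(\tilde{t}_n,s,x_0,y_0)\| \le K e^{-\kappa_x(\tilde{t}_n-s)}$ (Lemma 5.3 of \cite{gugan}) together with $\|\nabla\lambda(y_k)\| \le L_\lambda$, which holds because $\lambda$ is $L_\lambda$-Lipschitz. This gives
\[
\|B_n\| \le K\sum_{k=n_0}^{n-1}\epsilon_k\|M_{k+1}^{(2)}\|\int_{\tilde{t}_k}^{\tilde{t}_{k+1}}e^{-\kappa_x(\tilde{t}_n-s)}\,ds \le K\sum_{k=n_0}^{n-1}\epsilon_k\|M_{k+1}^{(2)}\|\,a_k e^{-\kappa_x(\tilde{t}_n-\tilde{t}_{k+1})},
\]
the last step using $\tilde{t}_n - s \ge \tilde{t}_n - \tilde{t}_{k+1}$ for $s \le \tilde{t}_{k+1}$. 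Next I would factor out $\sup_{n_0\le k\le n-1}\epsilon_k\|M_{k+1}^{(2)}\|$ and bound the remaining sum $\sum_{k=n_0}^{n-1}a_k e^{-\kappa_x(\tilde{t}_n-\tilde{t}_{k+1})}$ by $e^{\kappa_x}\int_{\tilde{t}_{n_0}}^{\tilde{t}_n}e^{-\kappa_x(\tilde{t}_n-s)}\,ds \le e^{\kappa_x}/\kappa_x$, exactly as at the end of the proof of the bound on $\|A_n\|$ (this uses $a_k<1$). Absorbing $L_\lambda$ and $e^{\kappa_x}/\kappa_x$ into the generic constant $K$ then yields the claim.

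I do not expect a genuine obstacle here: the estimate is deliberately crude, retaining $\|M_{k+1}^{(2)}\|$ so that it can later be combined with the sub-exponential tail bound on the martingale increments. The only point needing a little care is the Riemann-sum comparison for the exponentially weighted stepsize sum, but that argument is identical to one already carried out in the preceding lemma and can simply be quoted.
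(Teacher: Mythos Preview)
Your proposal is correct and matches the paper's proof essentially line for line: triangle inequality, $\|\nabla\lambda(y_k)\|\le L_\lambda$, the exponential bound on $\Phi_x$, the crude estimate $\int_{\tilde{t}_k}^{\tilde{t}_{k+1}}e^{-\kappa_x(\tilde{t}_n-s)}ds\le a_k e^{-\kappa_x(\tilde{t}_n-\tilde{t}_{k+1})}$, extraction of the supremum, and the same Riemann-sum bound $\sum a_k e^{-\kappa_x(\tilde{t}_n-\tilde{t}_{k+1})}\le e^{\kappa_x}/\kappa_x$. The only minor imprecision is your phrase ``the integrand \dots\ is constant in $s$'': the factor $\Phi_x(\tilde{t}_n,s,\overline{z}(s),y(s))$ does depend on $s$, but since your displayed computation retains that dependence correctly this is purely a wording issue and does not affect the argument.
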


\begin{proof}
We have:
\begin{align}
||B_n|| &\leq \sum_{k={n_0}}^{n-1} \int_{\tilde{t}_k}^{\tilde{t}_{k+1}} ||\Phi(\tilde{t}_n, s, \overline{z}(s), y(s))|| \times \nonumber \\
&\qquad \qquad \qquad \qquad \qquad ||\epsilon_{k} \nabla \lambda(y_k) M_{k+1}^{(2)} ||ds \nonumber \\
&\leq L_\lambda \sum_{k={n_0}}^{n-1} \int_{\tilde{t}_k}^{\tilde{t}_{k+1}} ||\Phi(\tilde{t}_n, s, \overline{z}(s), y(s))|| \times \nonumber \\
&\qquad \qquad \qquad \qquad \qquad ||\epsilon_{k} M_{k+1}^{(2)} ||ds \nonumber \\
&\leq  K \sum_{k={n_0}}^{n-1} \epsilon_{k} || M_{k+1}^{(2)} || \int_{\tilde{t}_k}^{\tilde{t}_{k+1}} e^{-\kappa_x(\tilde{t}_n-s)} ds \nonumber \\
&\leq  K \big[ \sup_{{n_0} \leq k \leq n-1} \epsilon_k || M_{k+1}^{(2)} || \big] \sum_{k={n_0}}^{n-1} e^{-\kappa_x(\tilde{t}_n-\tilde{t}_{k+1})}a_k \nonumber \\
&\leq K \big[ \sup_{{n_0} \leq k \leq n-1} \epsilon_k || M_{k+1}^{(2)} || \big]. \nonumber
\end{align}
\end{proof}

\begin{lemma}
Let $n \geq n_0$ be arbitrary. Then on $G_n$,
\begin{align}
||C_n|| \leq K \big[\sup_{{n_0} \leq k \leq n-1} \epsilon_k \big]. \nonumber
\end{align}
\end{lemma}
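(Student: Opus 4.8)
\emph{Proof proposal.} The plan is to follow exactly the template used for the bounds on $\|A_n\|$ and $\|B_n\|$, with the simplification that the summand of $C_n$ carries no martingale increment, only the deterministic factor $\eta_k = \epsilon_k g(x_k,y_k)$; consequently the estimate will collapse to a constant multiple of $\sup_k \epsilon_k$ after a single geometric summation. First I would apply the triangle inequality to the definition of $C_n$ to get
\[
\|C_n\| \le \sum_{k=n_0}^{n-1} \int_{\tilde t_k}^{\tilde t_{k+1}} \|\Phi_x(\tilde t_n, s, \overline z(s), y(s))\|\;\|\eta_k\|\;\|\nabla\lambda(y_k)\|\,ds,
\]
and then bound the three factors separately. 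On $G_n$ the fundamental-matrix estimate quoted from Lemma~5.3 of \cite{gugan} gives $\|\Phi_x(\tilde t_n, s, \overline z(s), y(s))\| \le K e^{-\kappa_x(\tilde t_n - s)}$; the standing assumption $\|g(\cdot)\| \le B_g$ gives $\|\eta_k\| = \epsilon_k\|g(x_k,y_k)\| \le B_g\,\epsilon_k$; and the $L_\lambda$-Lipschitz continuity of $\lambda$ gives $\|\nabla\lambda(y_k)\| \le L_\lambda$ (reading $\nabla\lambda$ as the a.e.-defined Jacobian, bounded in operator norm by $L_\lambda$). Absorbing $B_g$ and $L_\lambda$ into the generic constant $K$ yields $\|C_n\| \le K \sum_{k=n_0}^{n-1} \epsilon_k \int_{\tilde t_k}^{\tilde t_{k+1}} e^{-\kappa_x(\tilde t_n - s)}\,ds$.

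Next, since $s \mapsto e^{-\kappa_x(\tilde t_n - s)}$ is increasing and the interval $[\tilde t_k,\tilde t_{k+1}]$ has length $a_k$, I would use $\int_{\tilde t_k}^{\tilde t_{k+1}} e^{-\kappa_x(\tilde t_n - s)}\,ds \le a_k e^{-\kappa_x(\tilde t_n - \tilde t_{k+1})}$, pull the factor $\sup_{n_0 \le k \le n-1}\epsilon_k$ out of the sum, and finish with the same Riemann-sum comparison already invoked for $\|A_n\|$, namely $\sum_{k=n_0}^{n-1} e^{-\kappa_x(\tilde t_n - \tilde t_{k+1})}a_k \le e^{\kappa_x}\int_{\tilde t_{n_0}}^{\tilde t_n} e^{-\kappa_x(\tilde t_n - s)}\,ds \le e^{\kappa_x}/\kappa_x$ (valid because $a_k < 1$). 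This gives $\|C_n\| \le K \sup_{n_0 \le k \le n-1}\epsilon_k$, which is the claim.

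I do not anticipate a genuine obstacle here: this is the most benign of the four term-bounds, since $g$ is uniformly bounded and there is no stochastic contribution to carry, so—unlike the bound on $\|B_n\|$—we do not even retain a residual factor $\|M_{k+1}^{(2)}\|$. The only points needing a little care are that the $\Phi_x$ estimate is available only on the event $G_n$, which is exactly the hypothesis of the lemma, and the correct interpretation of $\nabla\lambda$ as a bounded Jacobian under the Lipschitz assumption.
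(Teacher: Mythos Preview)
Your proposal is correct and follows essentially the same argument as the paper's proof: triangle inequality, the exponential bound $\|\Phi_x\|\le Ke^{-\kappa_x(\tilde t_n-s)}$ on $G_n$, the uniform bounds $\|g\|\le B_g$ and $\|\nabla\lambda\|\le L_\lambda$, then pulling out $\sup_k\epsilon_k$ and collapsing the remaining sum via $\sum_{k=n_0}^{n-1} e^{-\kappa_x(\tilde t_n-\tilde t_{k+1})}a_k \le e^{\kappa_x}/\kappa_x$. There is no substantive difference between your route and the paper's.
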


\begin{proof}
We have:
\begin{align}
||C_n|| &\leq \sum_{k={n_0}}^{n-1} \int_{\tilde{t}_k}^{\tilde{t}_{k+1}} ||\Phi(\tilde{t}_n, s, \overline{z}(s), y(s))|| \times \nonumber \\
& \qquad \qquad \qquad \qquad \qquad ||\eta_k  \nabla \lambda(y_k)|| ds \nonumber \\
&\leq L_\lambda B_g \sum_{k={n_0}}^{n-1} \int_{\tilde{t}_k}^{\tilde{t}_{k+1}} \epsilon_k ||\Phi(\tilde{t}_n, s, \overline{z}(s), y(s))||ds \nonumber \\
&\leq  K \sum_{k={n_0}}^{n-1} \epsilon_k \int_{\tilde{t}_k}^{\tilde{t}_{k+1}} e^{-\kappa_x(\tilde{t}_n-s)} ds \nonumber \\
&\leq  K \big[\sup_{{n_0} \leq k \leq n-1} \epsilon_k \big]\sum_{k=0}^{n-1} e^{-\kappa_x(\tilde{t}_n-\tilde{t}_{k+1})}a_k \nonumber \\
&\leq K \big[\sup_{{n_0} \leq k \leq n-1} \epsilon_k \big]. \nonumber
\end{align}
\end{proof}
\begin{lemma}
Let $n \geq n_0$ be arbitrary. Then on $G_n$,
\begin{align}
||D_n|| \leq K \bigg[\sup_{n_0 \leq k \leq n-1} \epsilon_k b_k &+ \sup_{n_0 \leq k \leq n-1} \epsilon_k b_k || M_{k+1}^{(2)}|| \nonumber \\
&+ \sup_{n_0 \leq k \leq n-1} \epsilon_k b_k || M_{k+1}^{(2)}||^2 \bigg]. \nonumber
\end{align}
\end{lemma}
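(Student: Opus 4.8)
\emph{Proof proposal.} The plan is to handle $D_n$ exactly as $B_n$ and $C_n$ were handled above, since $D_n$ has the same structure: a finite sum over $k$ of integrals of the fundamental matrix $\Phi_x$ against a perturbation term that is \emph{constant} on each interval $[\tilde t_k,\tilde t_{k+1})$, namely $\varepsilon_{k+1}$. First I would apply the triangle inequality and submultiplicativity of the operator norm to get
\begin{align}
\|D_n\| \leq \sum_{k=n_0}^{n-1} \int_{\tilde t_k}^{\tilde t_{k+1}} \|\Phi_x(\tilde t_n, s, \overline z(s), y(s))\|\,\|\varepsilon_{k+1}\|\,ds. \nonumber
\end{align}
On $G_n$ (in its $\{z_k\}$ incarnation) we have $\overline z(s)\in V^r$, so the exponential stability estimate $\|\Phi_x(\tilde t_n,s,x_0,y_0)\|\leq K e^{-\kappa_x(\tilde t_n-s)}$ applies; and the Taylor-remainder bound established just before $(\ref{eq:stoc_x})$ gives $\|\varepsilon_{k+1}\|\leq K(\epsilon_k b_k + \epsilon_k b_k\|M_{k+1}^{(2)}\| + \epsilon_k b_k\|M_{k+1}^{(2)}\|^2)$, which does not depend on $s$ inside $[\tilde t_k,\tilde t_{k+1})$ and can be pulled out of the integral.

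Next I would factor the $k$-dependent coefficient out as the supremum over $n_0\leq k\leq n-1$, reducing the bound to
\begin{align}
\|D_n\| \leq K\Big[\sup_{n_0\le k\le n-1}\!\epsilon_k b_k + \sup_{n_0\le k\le n-1}\!\epsilon_k b_k\|M_{k+1}^{(2)}\| + \sup_{n_0\le k\le n-1}\!\epsilon_k b_k\|M_{k+1}^{(2)}\|^2\Big]\sum_{k=n_0}^{n-1}\int_{\tilde t_k}^{\tilde t_{k+1}} e^{-\kappa_x(\tilde t_n-s)}\,ds. \nonumber
\end{align}
Finally, using $\int_{\tilde t_k}^{\tilde t_{k+1}} e^{-\kappa_x(\tilde t_n-s)}\,ds \leq e^{-\kappa_x(\tilde t_n-\tilde t_{k+1})}a_k$ together with the geometric-sum bound already used in the proof of $\|A_n\|$, namely $\sum_{k=n_0}^{n-1} e^{-\kappa_x(\tilde t_n-\tilde t_{k+1})}a_k \leq e^{\kappa_x}/\kappa_x$ (valid since each $a_k<1$), I absorb the resulting constant into $K$ and obtain the claimed inequality.

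There is essentially no serious obstacle here: this is the easiest of the five lemmas, because $\varepsilon_{k+1}$ is already known to be $O(\epsilon_k b_k)$ times a quadratic-in-noise factor, so no cancellation, martingale argument, or interpolation estimate is needed beyond the exponential stability of $\Phi_x$ and a Riemann-sum comparison. The only points worth a moment's care are that the bound on $\|\varepsilon_{k+1}\|$ was derived unconditionally (via $\|g\|\le B_g$ and the Taylor estimate), hence holds on $G_n$ without further restriction, and that the decay estimate for $\Phi_x$ genuinely requires $\overline z(s)\in V^r$, which is precisely what membership in $G_n$ provides along the whole interval $[\tilde t_{n_0},\tilde t_n]$.
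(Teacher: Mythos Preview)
Your proposal is correct and follows essentially the same approach as the paper: triangle inequality, the pointwise bound on $\|\varepsilon_{k+1}\|$, the exponential decay of $\Phi_x$ on $G_n$, and the geometric-sum estimate $\sum_{k=n_0}^{n-1} e^{-\kappa_x(\tilde t_n-\tilde t_{k+1})}a_k \leq e^{\kappa_x}/\kappa_x$. The only cosmetic difference is that the paper splits the bound on $\|\varepsilon_{k+1}\|$ into its three summands and treats each resulting sum separately, whereas you pull out the supremum in one step; both routes are equivalent.
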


\begin{proof}
We have:
\begin{eqnarray*}
\lefteqn{||D_n|| \leq \sum_{k={n_0}}^{n-1} \int_{\tilde{t}_k}^{\tilde{t}_{k+1}} ||\Phi(\tilde{t}_n, s, \overline{z}(s), y(s))|| \times || \varepsilon_{k+1}||ds} \nonumber \\
&\leq K \bigg( \sum_{k={n_0}}^{n-1} \int_{\tilde{t}_k}^{\tilde{t}_{k+1}} \epsilon_k b_k ||\Phi(\tilde{t}_n, s, \overline{z}(s), y(s))||  ds  + \nonumber \\
& \sum_{k={n_0}}^{n-1} \int_{\tilde{t}_k}^{\tilde{t}_{k+1}} \epsilon_k b_k ||\Phi(\tilde{t}_n, s, \overline{z}(s), y(s))|||| M_{k+1}^{(2)}||ds  + \nonumber \\
%&\qquad \qquad \qquad \qquad \qquad \qquad \qquad \quad  || M_{k+1}^{(2)}||ds \nonumber \\
& \sum_{k={n_0}}^{n-1} \int_{\tilde{t}_k}^{\tilde{t}_{k+1}} \epsilon_k b_k ||\Phi(\tilde{t}_n, s, \overline{z}(s), y(s))|||| M_{k+1}^{(2)}||^2ds \bigg).   \nonumber \\
%&\qquad \qquad \qquad \qquad \qquad \qquad \qquad \quad || M_{k+1}^{(2)}||^2ds \bigg). \nonumber
\end{eqnarray*}
We bound each of these terms individually. Thus
\begin{align}
&K\sum_{k={n_0}}^{n-1} \int_{\tilde{t}_k}^{\tilde{t}_{k+1}} \epsilon_k b_k ||\Phi(\tilde{t}_n, s, \overline{z}(s), y(s))||  ds \nonumber \\
\leq& K \sum_{k={n_0}}^{n-1} \epsilon_k b_k \int_{\tilde{t}_k}^{\tilde{t}_{k+1}} e^{-\kappa_x (\tilde{t}_n -s)}ds \nonumber \\
\leq& K   \big[\sup_{n_0 \leq k \leq n-1} \epsilon_k b_k \big] \sum_{k={n_0}}^{n-1} e^{-\kappa_x (\tilde{t}_n -\tilde{t}_{k+1})}a_k \nonumber \\
\leq& K \big[\sup_{n_0 \leq k \leq n-1} \epsilon_k b_k \big], \nonumber 
\end{align}
\begin{align}
& K\sum_{k={n_0}}^{n-1} \int_{\tilde{t}_k}^{\tilde{t}_{k+1}} \epsilon_k b_k ||\Phi(\tilde{t}_n, s, \overline{z}(s), y(s))|||| M_{k+1}^{(2)}||ds \nonumber \\
\leq& K \sum_{k={n_0}}^{n-1} \int_{\tilde{t}_k}^{\tilde{t}_{k+1}} \epsilon_k b_k e^{-\kappa (\tilde{t}_n -s)} || M_{k+1}^{(2)}||ds \nonumber \\
\leq& K  \sum_{k={n_0}}^{n-1} \epsilon_k b_k e^{-\kappa (\tilde{t}_n - \tilde{t}_{k+1})} a_k || M_{k+1}^{(2)}|| \nonumber \\
\leq& K  \big[\sup_{n_0 \leq k \leq n-1} \epsilon_k b_k || M_{k+1}^{(2)}|| \big] \sum_{k={n_0}}^{n-1} e^{-\kappa (\tilde{t}_n - \tilde{t}_{k+1})} a_k \nonumber \\
\leq& K \big[\sup_{n_0 \leq k \leq n-1} \epsilon_k b_k || M_{k+1}^{(2)}|| \big], \nonumber 
\end{align}
\begin{align}
&K\sum_{k={n_0}}^{n-1} \int_{\tilde{t}_k}^{\tilde{t}_{k+1}} \epsilon_k b_k ||\Phi(\tilde{t}_n, s, \overline{z}(s), y(s))|||| M_{k+1}^{(2)}||^2ds \nonumber \\
\leq&K \big[\sup_{n_0 \leq k \leq n-1} \epsilon_k b_k || M_{k+1}^{(2)}||^2 \big]. \nonumber
\end{align}
Combining all of the above bounds, we have
\begin{align}
||D_n|| \leq K \bigg[\sup_{n_0 \leq k \leq n-1} \epsilon_k b_k &+ \sup_{n_0 \leq k \leq n-1} \epsilon_k b_k || M_{k+1}^{(2)}|| \nonumber \\
&+ \sup_{n_0 \leq k \leq n-1} \epsilon_k b_k || M_{k+1}^{(2)}||^2 \bigg]. \nonumber
\end{align}
\end{proof}
Combining the above, we have
\begin{align}
||&\overline{z}(\tilde{t}_n) - x(\tilde{t}_n)|| \leq K \bigg[ e^{-\kappa_x (\tilde{t}_n-\tilde{t}_{n_0})}||\overline{z}(\tilde{t}_{n_0}) - x(\tilde{t}_{n_0})|| \nonumber \\ 
& +\sup_{{n_0} \leq k \leq n-1} b_k + \sup_{{n_0} \leq k \leq n-1} b_k ||M_{k+1}^{(2)}|| \nonumber \\
& + \sup_{{n_0} \leq k \leq n-1} b_k^2 + \sup_{{n_0} \leq k \leq n-1} b_k^2||M_{k+1}^{(2)}||  \nonumber \\
&+  \sup_{{n_0} \leq k \leq n-1} b_k^2||M_{k+1}^{(2)}||^2 + \sup_{{n_0} \leq k \leq n-1} \epsilon_k || M_{k+1}^{(2)} ||   \nonumber \\
& + \sup_{{n_0} \leq k \leq n-1} \epsilon_k +\sup_{n_0 \leq k \leq n-1} \epsilon_k b_k  \nonumber \\
&+ \sup_{n_0 \leq k \leq n-1} \epsilon_k b_k || M_{k+1}^{(2)}|| + \sup_{n_0 \leq k \leq n-1} \epsilon_k b_k || M_{k+1}^{(2)}||^2 \bigg]. \nonumber 
\end{align}
Combining with the results of the preceding subsections, we have the following error bound on $G_n$:
\begin{align}
||x_n - &z_n|| \leq K \bigg[ ||S_n^{(1)}|| + \sup_{{n_0} \leq k \leq n-1} a_k   \nonumber \\
& + \sup_{{n_0} \leq k \leq n-1} a_k ||M_{k+1}^{(1)}||^2 + \sup_{{n_0} \leq k \leq n-1} b_k  \nonumber \\
& + \sup_{{n_0} \leq k \leq n-1} b_k ||M_{k+1}^{(2)}|| + \sup_{{n_0} \leq k \leq n-1} b_k^2   \nonumber \\
&+ \sup_{{n_0} \leq k \leq n-1} b_k^2||M_{k+1}^{(2)}|| +  \sup_{{n_0} \leq k \leq n-1} b_k^2||M_{k+1}^{(2)}||^2 \nonumber \\
& + \sup_{{n_0} \leq k \leq n-1} \epsilon_k || M_{k+1}^{(2)} || + \sup_{{n_0} \leq k \leq n-1} \epsilon_k  \nonumber \\
&  +\sup_{n_0 \leq k \leq n-1} \epsilon_k b_k + \sup_{n_0 \leq k \leq n-1} \epsilon_k b_k || M_{k+1}^{(2)}|| \nonumber \\
&+ \sup_{n_0 \leq k \leq n-1} \epsilon_k b_k || M_{k+1}^{(2)}||^2  + e^{-\kappa_x (\tilde{t}_n-\tilde{t}_{n_0})}H_{n_0} \bigg], \nonumber 
\end{align}
where $H_{n_0} = \big(||\overline{x}(\tilde{t}_{n_0}) - x(\tilde{t}_{n_0})|| + ||\overline{z}(\tilde{t}_{n_0}) - x(\tilde{t}_{n_0})|| \big)$. Using $||x|| \leq 1 + ||x||^2$, we have:  on $G_n$,
\begin{align}
||x_n - &z_n|| \leq K \bigg[ ||S_n^{(1)}|| +e^{-\kappa_x (\tilde{t}_n-\tilde{t}_{n_0})} H_{n_0} \nonumber \\
&+ \sup_{{n_0} \leq k \leq n-1} a_k + \sup_{{n_0} \leq k \leq n-1} a_k ||M_{k+1}^{(1)}||^2  \nonumber \\
& + \sup_{{n_0} \leq k \leq n-1} b_k + \sup_{{n_0} \leq k \leq n-1} b_k ||M_{k+1}^{(2)}||^2 \nonumber \\
& + \sup_{{n_0} \leq k \leq n-1} b_k^2 + \sup_{{n_0} \leq k \leq n-1} b_k^2||M_{k+1}^{(2)}||^2  \nonumber \\
&+ \sup_{{n_0} \leq k \leq n-1} \epsilon_k + \sup_{{n_0} \leq k \leq n-1} \epsilon_k || M_{k+1}^{(2)} ||^2   \nonumber \\
& +\sup_{n_0 \leq k \leq n-1} \epsilon_k b_k + \sup_{n_0 \leq k \leq n-1} \epsilon_k b_k || M_{k+1}^{(2)}||^2  \bigg]. \nonumber \\
&\leq K \bigg[ ||S_n^{(1)}|| +e^{-\kappa_x (\tilde{t}_n-\tilde{t}_{n_0})} H_{n_0} \nonumber  \\
&+ \sup_{{n_0} \leq k \leq n-1} a_k   + \sup_{{n_0}\leq k \leq n-1} a_k ||M_{k+1}^{(1)}||^2 \nonumber  \\
&+ \sup_{{n_0} \leq k \leq n-1} \epsilon_k + \sup_{{n_0} \leq k \leq n-1} \epsilon_k || M_{k+1}^{(2)} ||^2 \bigg]. \label{numberone}
\end{align}

\subsection{Deviation bound for  $\{y_n\}$}

Now let $\hat{t}_0 = 0, \hat{t}_{k+1} = \hat{t}_k + b_k, k \geq 0$.
Rewrite (\ref{eq:stoc_approx_y}) as
\begin{align}
y_{k+1} = y_{k} &+ b_k \big( g(\lambda(y_k),y_k) \nonumber \\
&+  (g(x_k,y_k) - g(\lambda(y_k),y_k)) + M_{k+1}^{(2)} \big). \nonumber
\end{align}
Define  $\overline{y}(\cdot)$ by $\overline{y}(\hat{t}_k) = y_k \ \forall k$ and for $t \in (\hat{t}_{k}, \hat{t}_{k+1})$, 
\begin{align*}
\overline{y}(t) = y_k + \frac{t-\hat{t}_k}{b_k} [ y_{k+1} - y_k ].
\end{align*}
Then
\begin{align}
\overline{y}(t) = &\overline{y}(\hat{t}_{n_0}) + \int_{\hat{t}_{n_0}}^tg(\lambda(\bar{y}(s)), \bar{y}(s))ds \nonumber \\
& + \int_{\hat{t}_{n_0}}^{t} \big( \xi_7(s) + \xi_8(s) + \xi_9(s) \big) ds, \nonumber
\end{align}
where for $s \in [\hat{t}_k,\hat{t}_{k+1})$,
\begin{align}
\xi_7(s) &= g(\lambda(y_k),y_k) - g(\lambda(\overline{y}(s)),\overline{y}(s)),  \nonumber \\
\xi_8(s) &= g(x_k,y_k) - g(\lambda(y_k),y_k),  \ \xi_9(s) = M_{k+1}^{(2)}. \nonumber
\end{align}
This can be seen as a perturbation of the differential equation:
\begin{align}
\dot{y}(t) = g(\lambda(y(t)), y(t)). \nonumber
\end{align}
The generalized Alekseev's formula yields:
\begin{align}
\overline{y}(t) = y(t, &\hat{t}_{n_0}, y(\hat{t}_{n_0})) + \Phi_y(t, \hat{t}_{n_0}, \overline{y}(\hat{t}_{n_0}))(\overline{y}(\hat{t}_{n_0}) - y(\hat{t}_{n_0})) \nonumber \\
&+\int_{\hat{t}_{n_0}}^{t}\Phi_y(t, s, \overline{y}(s) ) \big[ \xi_1(s) + \xi_2(s) + \xi_3(s) \big]ds, \nonumber
\end{align}
where $y(t) \equiv y^*$ and
$\Phi_y(\cdot)$ is given by
\begin{align}
\label{eq:Phi_diff_eq_y}
\dot{\Phi}_y(t, s,  y_0) = \tilde{D}(\lambda(y^*), y^*) \Phi_y(t, s, y_0)
\end{align}
with $\Phi_y(s, s, y_0) = \mathcal{I}$, $\tilde{D}$ being the Jacobian matrix of $g(\lambda(\cdot), \cdot)$. 
As shown in Lemma $5.3$, \cite{gugan}, there exists $K, \kappa_y > 0$ so that the following holds for $t \geq s$:
\begin{align}
|| \Phi_y(t, s, y_0) || \leq K e^{-\kappa_y (t-s)}. \nonumber
\end{align} 
Define the following:
\begin{align}
\hat{A}_n &= \sum_{k={n_0}}^{n-1} \int_{\hat{t}_k}^{\hat{t}_{k+1}}\Phi_y(\hat{t}_n, s, \overline{y}(s)) \big[ g(\lambda(y_k),y_k) \nonumber \\
& \qquad \qquad \qquad \qquad \qquad - g(\lambda(\overline{y}(s)),\overline{y}(s)) \big] ds, \nonumber \\
\hat{B}_n &= \sum_{k={n_0}}^{n-1} \int_{\hat{t}_k}^{\hat{t}_{k+1}}\Phi_y(\hat{t}_n, s, \overline{y}(s)) \big[ g(x_k,y_k) \nonumber \\
& \qquad \qquad \qquad \qquad \qquad - g(\lambda(y_k),y_k) \big] ds, \nonumber \\
\hat{C}_n &= \sum_{k={n_0}}^{n-1} \int_{\hat{t}_k}^{\hat{t}_{k+1}}\Phi_y(\hat{t}_n, s, \overline{y}(s))M_{k+1}^{(2)} ds, \nonumber \\
\hat{S}_n^{(2)} &= \sum_{k={n_0}}^{n-1} \int_{\hat{t}_k}^{\hat{t}_{k+1}}\Phi_y(\hat{t}_n, s, \overline{y}(\hat{t}_k))M_{k+1}^{(2)} ds. \nonumber 
\end{align}
Then
\begin{align}
&||\overline{y}(\hat{t}_n) - y(\hat{t}_n, \hat{t}_{n_0}, y_{n_0})|| \nonumber \\
&\leq ||\Phi_y(t, \hat{t}_{n_0}, \overline{y}(\hat{t}_{n_0}))(\overline{y}(\hat{t}_{n_0}) - y(\hat{t}_{n_0}))|| + ||\hat{A}_n||  + ||\hat{B}_n|| \nonumber \\
&\qquad \qquad \qquad \qquad \qquad + ||\hat{C}_n-\hat{S}_n^{(2)}|| + ||\hat{S}_n^{(2)}||. \nonumber
\end{align}
As done in the previous two sections, we have:
\begin{align}
||\Phi_y(t, \hat{t}_{n_0}, \overline{y}(\hat{t}_{n_0}))&(\overline{y}(\hat{t}_{n_0}) - y(\hat{t}_{n_0}))|| \nonumber \\
&\leq e^{-\kappa_y (\hat{t}_n-\hat{t}_{n_0})}( ||\overline{y}(\hat{t}_{n_0}) - y(\hat{t}_{n_0})||). \nonumber 
\end{align}
We now bound each of the other terms on the right hand side through a sequence of lemmas.

\begin{lemma}
Let $n \geq n_0$ be arbitrary. Then on $G_n$ (now redefined in terms of $\{y_n\}$),
\begin{align}
||\hat{A}_n|| \leq K \big[ \sup_{{n_0} \leq k \leq n-1} b_k + \sup_{{n_0} \leq k \leq n-1} b_k ||M_{k+1}^{(2)}|| \big]. \nonumber
\end{align}

\end{lemma}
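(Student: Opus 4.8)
The plan is to follow the template of the bound on $\|A_n\|$ established above (itself modeled on Lemma $5.6$ of \cite{gugan}), transposed to the slow time scale $\{\hat t_k\}$. First I would invoke the exponential estimate $\|\Phi_y(\hat t_n, s, \overline y(s))\| \le Ke^{-\kappa_y(\hat t_n - s)}$ recorded just above, which is available on $G_n$, to pull the fundamental matrix out of the integrand. This reduces matters to controlling $\|g(\lambda(y_k),y_k) - g(\lambda(\overline y(s)),\overline y(s))\|$ for $s \in [\hat t_k,\hat t_{k+1})$. Since $g$ and $\lambda$ are Lipschitz, the composite map $y \mapsto g(\lambda(y),y)$ is Lipschitz with constant $L_g(1+L_\lambda)$, so this difference is at most $K\|\overline y(\hat t_k) - \overline y(s)\|$ (recall $y_k = \overline y(\hat t_k)$).

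Next I would estimate the oscillation of the interpolant over one slow step. For $s \in [\hat t_k,\hat t_{k+1})$,
\[
\|\overline y(s) - \overline y(\hat t_k)\| = \frac{s-\hat t_k}{b_k}\|y_{k+1} - y_k\| = (s-\hat t_k)\,\|g(x_k,y_k) + M_{k+1}^{(2)}\| \le (s-\hat t_k)\big(B_g + \|M_{k+1}^{(2)}\|\big),
\]
using $\|g(\cdot)\| \le B_g$. Combining this with the Lipschitz bound above and the elementary inequality
\[
\int_{\hat t_k}^{\hat t_{k+1}}(s-\hat t_k)\,e^{-\kappa_y(\hat t_n - s)}\,ds \le e^{-\kappa_y(\hat t_n - \hat t_{k+1})}b_k^2,
\]
I obtain
\[
\|\hat A_n\| \le K\sum_{k=n_0}^{n-1}\big(B_g + \|M_{k+1}^{(2)}\|\big)\,e^{-\kappa_y(\hat t_n - \hat t_{k+1})}b_k^2 .
\]

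Finally I would factor out the supremum, writing $\big(B_g + \|M_{k+1}^{(2)}\|\big)b_k^2 \le K\big(b_k + b_k\|M_{k+1}^{(2)}\|\big)\cdot b_k$, and bound the remaining geometric-type sum as in the $\|A_n\|$ argument: since $b_k < 1$,
\[
\sum_{k=n_0}^{n-1} e^{-\kappa_y(\hat t_n - \hat t_{k+1})}b_k \le e^{\kappa_y}\int_{\hat t_{n_0}}^{\hat t_n} e^{-\kappa_y(\hat t_n - s)}\,ds \le \frac{e^{\kappa_y}}{\kappa_y},
\]
which yields the claimed estimate. I do not expect a genuine obstacle here; the one point that needs care is that both the exponential decay of $\Phi_y$ and the boundedness of $\overline y(\cdot)$ that make these steps valid uniformly in $n$ are guaranteed precisely by restricting to the event $G_n$ (redefined in terms of $\{y_n\}$) — that is the role of the conditioning in the statement.
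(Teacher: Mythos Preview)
Your proposal is correct and is precisely the argument the paper has in mind: the paper's own proof is a one-line deferral to Lemma~5.6 of \cite{gugan}, and what you have written is exactly that lemma transposed to the slow time scale $\{\hat t_k\}$, with the same three ingredients (exponential decay of $\Phi_y$ on $G_n$, Lipschitz continuity of $y\mapsto g(\lambda(y),y)$, and the geometric sum $\sum_k e^{-\kappa_y(\hat t_n-\hat t_{k+1})}b_k \le e^{\kappa_y}/\kappa_y$). There is nothing to add.
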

\begin{proof}
The proof exactly follows that of Lemma $5.6$, \cite{gugan}
\end{proof}

\begin{lemma}
Let $n \geq n_0$ be arbitrary. Then on $G_n$:
\begin{align}
||\hat{B}_n|| &\leq K \bigg[ \sup_{{n_0} \leq k \leq n-1}||S_k^{(1)}|| + \sup_{{n_0} \leq k \leq n-1} \epsilon_k H_{n_0} \nonumber \\
&+ \sup_{{n_0} \leq k \leq n-1} a_k + \sup_{{n_0} \leq k \leq m-1} a_k ||M_{k+1}^{(1)}||^2 \nonumber \\ 
&+ \sup_{{n_0} \leq k \leq n-1} \epsilon_k + \sup_{{n_0} \leq k \leq n-1} \epsilon_k || M_{k+1}^{(2)} ||^2  \bigg]. \nonumber  \nonumber
\end{align}
\end{lemma}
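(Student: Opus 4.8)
The plan is to reduce $\|\hat B_n\|$ entirely to the pathwise estimate (\ref{numberone}) for $\|x_k - z_k\|$. First I would use $z_k = \lambda(y_k)$ to recognize the integrand of $\hat B_n$ as $g(x_k,y_k) - g(\lambda(y_k),y_k)$, so that Lipschitz continuity of $g$ gives $\|g(x_k,y_k) - g(\lambda(y_k),y_k)\| \le L_g\|x_k - \lambda(y_k)\| = L_g\|x_k - z_k\|$. Together with the exponential decay $\|\Phi_y(\hat t_n, s, \overline y(s))\| \le K e^{-\kappa_y(\hat t_n - s)}$ (valid on $G_n$) and $\int_{\hat t_k}^{\hat t_{k+1}} e^{-\kappa_y(\hat t_n - s)}\,ds \le e^{-\kappa_y(\hat t_n - \hat t_{k+1})} b_k$, this yields, on $G_n$,
\[
\|\hat B_n\| \;\le\; K\sum_{k=n_0}^{n-1} b_k\, e^{-\kappa_y(\hat t_n - \hat t_{k+1})}\,\|x_k - z_k\|.
\]

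Next I would substitute (\ref{numberone}), applied with $n$ replaced by $k$, into this sum and distribute over the terms of that bound. Every summand of (\ref{numberone}) other than the initial-condition term is a $\sup$ over an index range contained in $\{n_0,\dots,n-1\}$, so it can be pulled outside the sum as the corresponding $\sup_{n_0 \le k \le n-1}$; what is left behind is $\sum_{k=n_0}^{n-1} b_k e^{-\kappa_y(\hat t_n - \hat t_{k+1})}$, which is bounded by $e^{\kappa_y}/\kappa_y$ via the same Riemann-sum/integral comparison used in the proof of the bound on $\|A_n\|$ (here on the $\hat t$-time scale, using $b_k < 1$). This already produces the terms $\sup_k \|S_k^{(1)}\|$, $\sup_k a_k$, $\sup_k a_k\|M_{k+1}^{(1)}\|^2$, $\sup_k \epsilon_k$ and $\sup_k \epsilon_k\|M_{k+1}^{(2)}\|^2$ of the claim.

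The only step needing a genuine idea is the initial-condition term $e^{-\kappa_x(\tilde t_k - \tilde t_{n_0})}H_{n_0}$: naively pulling it out would only give $H_{n_0}$ rather than $\sup_k \epsilon_k H_{n_0}$. The trick is to write $b_k = \epsilon_k a_k$ and estimate
\[
\sum_{k=n_0}^{n-1} b_k\, e^{-\kappa_y(\hat t_n - \hat t_{k+1})}\, e^{-\kappa_x(\tilde t_k - \tilde t_{n_0})}H_{n_0} \;\le\; \Big(\sup_{n_0 \le k \le n-1}\epsilon_k\Big)H_{n_0}\sum_{k=n_0}^{n-1} a_k\, e^{-\kappa_x(\tilde t_k - \tilde t_{n_0})} \;\le\; K\Big(\sup_{n_0 \le k \le n-1}\epsilon_k\Big)H_{n_0},
\]
where $e^{-\kappa_y(\hat t_n - \hat t_{k+1})} \le 1$ is used and the remaining sum is controlled on the fast ($\tilde t$) time scale exactly as the analogous geometric sums above. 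Trading one power of the fast step $a_k$ against $b_k$ to extract the factor $\epsilon_k$ is the crux; combining it with the previous paragraph gives the asserted bound. A minor point to keep straight is that (\ref{numberone}) was obtained on $G_n$ defined via $\{x_n\}$, whereas here $G_n$ is the event for $\{y_n\}$; I would simply work on the intersection of the two (equivalently, take $G_n$ to require both $\overline x$ and $\overline y$ to remain in $V^r$ up to index $n$), which is all the argument actually uses.
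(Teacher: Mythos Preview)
Your proposal is correct and follows essentially the same route as the paper: bound the integrand via the Lipschitz property of $g$ to get $\|x_k-z_k\|$, substitute the pathwise bound (\ref{numberone}), pull the $\sup$'s through against the geometric weight $\sum_k b_k e^{-\kappa_y(\hat t_n-\hat t_{k+1})}$, and for the initial-condition term use $b_k=\epsilon_k a_k$ together with $\|\Phi_y\|\le K$ to reduce to $\sup_k\epsilon_k$ times the fast-scale geometric sum $\sum_k a_k e^{-\kappa_x(\tilde t_k-\tilde t_{n_0})}$. Your remark about working on the intersection of the two $G_n$ events is a legitimate clarification of a point the paper leaves implicit.
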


\begin{proof}
We have $||\hat{B}_n||$ bounded from above by 
\begin{align}
&\leq \sum_{k={n_0}}^{n-1} \int_{\hat{t}_k}^{\hat{t}_{k+1}} ||\Phi_y(\hat{t}_n, s, \overline{y}(s))|| \times \nonumber \\
&\qquad \qquad \qquad \qquad ||  g(x_k,y_k) - g(\lambda(y_k),y_k) ||ds \nonumber \\
&\leq L_g \sum_{k={n_0}}^{n-1} \int_{\hat{t}_k}^{\hat{t}_{k+1}} ||\Phi_y(\hat{t}_n, s, \overline{y}(s))|||| x_k - \lambda(y_k)|| ds \nonumber \\
&=  L_g \sum_{k={n_0}}^{n-1} \int_{\hat{t}_k}^{\hat{t}_{k+1}} ||\Phi_y(\hat{t}_n, s, \overline{y}(s))||||  x_k - z_k ||ds. \nonumber
\end{align}
From the previous section we have:
\begin{align}
||  x_k - z_k || &\leq K \bigg[ ||S_k^{(1)}|| +e^{-\kappa_x (\tilde{t}_k-\tilde{t}_{n_0})} H_{n_0} \nonumber \\
&+ \sup_{{n_0} \leq m \leq k-1} a_m + \sup_{{n_0} \leq m \leq k-1} a_m ||M_{m+1}^{(1)}||^2 \nonumber \\ 
&+ \sup_{{n_0} \leq m \leq k-1} \epsilon_m + \sup_{{n_0} \leq m \leq k-1} \epsilon_m || M_{m+1}^{(2)} ||^2 \bigg]. \nonumber
\end{align}
Substituting this back into the inequality, we have:
\begin{align}
&||\hat{B}_n|| \leq L_g K\sum_{k={n_0}}^{n-1} \int_{\hat{t}_k}^{\hat{t}_{k+1}} ||\Phi_y( \hat{t}_n, s, \overline{y}(s))||\bigg(||S_k^{(1)}|| \nonumber \\
&+ \sup_{{n_0} \leq m \leq k-1} a_m + \sup_{{n_0} \leq m \leq k-1} a_m ||M_{m+1}^{(1)}||^2\nonumber \\
&+ \sup_{{n_0} \leq m \leq k-1} \epsilon_m + \sup_{{n_0} \leq m \leq k-1} \epsilon_m || M_{m+1}^{(2)} ||^2  \nonumber \\
&+ e^{-\kappa_x (\tilde{t}_k-\tilde{t}_{n_0})} H_{n_0} \bigg) ds\nonumber \\
& \leq K \bigg[ \sup_{{n_0} \leq k \leq n-1}||S_k^{(1)}|| + \sup_{{n_0} \leq k \leq n-1} \epsilon_k H_{n_0} \nonumber \\
&+ \sup_{{n_0} \leq k \leq n-1} a_k + \sup_{{n_0} \leq k \leq m-1} a_k ||M_{k+1}^{(1)}||^2 \nonumber \\ 
&+ \sup_{{n_0} \leq k \leq n-1} \epsilon_k + \sup_{{n_0} \leq k \leq n-1} \epsilon_k || M_{k+1}^{(2)} ||^2  \bigg]. \nonumber 
\end{align}
The term $\sup_{{n_0} \leq k \leq n-1} \epsilon_k H_{n_0}$is derived as follows:
\begin{align}
&H_{n_0} \sum_{k={n_0}}^{n-1} \int_{\hat{t}_k}^{\hat{t}_{k+1}} ||\Phi_y( \hat{t}_n, s, \overline{y}(s))|| e^{-\kappa_x (\tilde{t}_k-\tilde{t}_{n_0})}ds \nonumber \\
&\leq  K H_{n_0}\sum_{k={n_0}}^{n-1} \int_{\hat{t}_k}^{\hat{t}_{k+1}} e^{-\kappa_x (\tilde{t}_k-\tilde{t}_{n_0})}ds \nonumber \\
%&\leq K H_{n_0}\sum_{k={n_0}}^{n-1}  e^{-\kappa_x (\tilde{t}_k-\tilde{t}_{n_0})} \epsilon_k a_k \nonumber \\
&\leq K H_{n_0} \sup_{{n_0} \leq m \leq k-1} \epsilon_k. \nonumber
\end{align}
\end{proof}

\begin{lemma}
Let $n \geq n_0$ be arbitrary. Then on $G_n$,
\begin{align}
||\hat{C}_n - \hat{S}_n^{(2)}|| \leq K \big[ \sup_{{n_0} \leq k \leq n-1} &b_k ||M_{k+1}^{(2)}|| \nonumber \\
&+ \sup_{{n_0} \leq k \leq n-1} b_k ||M_{k+1}^{(2)}||^2 \big]. \nonumber
\end{align}
\end{lemma}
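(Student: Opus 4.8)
The plan is to imitate the treatment of the corresponding term in the single time scale analysis of \cite{gugan}. The starting point is the identity
\begin{align*}
\hat{C}_n - \hat{S}_n^{(2)} = \sum_{k={n_0}}^{n-1} \int_{\hat{t}_k}^{\hat{t}_{k+1}} \big[ \Phi_y(\hat{t}_n, s, \overline{y}(s)) - \Phi_y(\hat{t}_n, s, \overline{y}(\hat{t}_k)) \big] M_{k+1}^{(2)} \, ds ,
\end{align*}
so that the only real issue is the sensitivity of the fundamental matrix $\Phi_y$ to a perturbation of its initial condition. As in \cite{gugan}, the same Gronwall argument that yields $\|\Phi_y(\hat{t}_n, s, y_0)\| \leq K e^{-\kappa_y(\hat{t}_n - s)}$ also gives, for $y_0, y_0'$ ranging over the compact sublevel set $V^r$ (which contains $\overline{y}(s)$ for all the relevant $s$ on the event $G_n$), a bound of the form
\begin{align*}
\| \Phi_y(\hat{t}_n, s, y_0) - \Phi_y(\hat{t}_n, s, y_0') \| \leq K e^{-\kappa_y(\hat{t}_n - s)} \| y_0 - y_0' \| .
\end{align*}

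Next I would estimate the one-step displacement of the interpolated trajectory. For $s \in [\hat{t}_k, \hat{t}_{k+1})$, the definition of $\overline{y}(\cdot)$ and the recursion (\ref{eq:stoc_approx_y}) give, using $\|g(\cdot)\| \leq B_g$,
\begin{align*}
\| \overline{y}(s) - \overline{y}(\hat{t}_k) \| = \tfrac{s - \hat{t}_k}{b_k}\, \| y_{k+1} - y_k \| \leq \| y_{k+1} - y_k \| = b_k \| g(x_k, y_k) + M_{k+1}^{(2)} \| \leq b_k \big( B_g + \| M_{k+1}^{(2)} \| \big) .
\end{align*}
Plugging this into the sensitivity bound, for $s \in [\hat{t}_k, \hat{t}_{k+1})$ one obtains
\begin{align*}
\big\| \big[ \Phi_y(\hat{t}_n, s, \overline{y}(s)) - \Phi_y(\hat{t}_n, s, \overline{y}(\hat{t}_k)) \big] M_{k+1}^{(2)} \big\| \leq K e^{-\kappa_y(\hat{t}_n - s)}\, b_k \big( \| M_{k+1}^{(2)} \| + \| M_{k+1}^{(2)} \|^2 \big) .
\end{align*}

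Finally I would integrate and sum exactly as in the earlier lemmas: using $\int_{\hat{t}_k}^{\hat{t}_{k+1}} e^{-\kappa_y(\hat{t}_n - s)}\, ds \leq b_k e^{-\kappa_y(\hat{t}_n - \hat{t}_{k+1})}$, pulling the supremum over $k$ out of the sum, and bounding $\sum_{k={n_0}}^{n-1} b_k e^{-\kappa_y(\hat{t}_n - \hat{t}_{k+1})} \leq e^{\kappa_y} \int_{\hat{t}_{n_0}}^{\hat{t}_n} e^{-\kappa_y(\hat{t}_n - s)}\, ds \leq e^{\kappa_y}/\kappa_y$ (here $b_k < 1$). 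This gives
\begin{align*}
\| \hat{C}_n - \hat{S}_n^{(2)} \| \leq K \big[ \sup_{{n_0} \leq k \leq n-1} b_k \| M_{k+1}^{(2)} \| + \sup_{{n_0} \leq k \leq n-1} b_k \| M_{k+1}^{(2)} \|^2 \big] ,
\end{align*}
which is the assertion.

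The one genuinely non-routine ingredient is the Lipschitz dependence of $\Phi_y$ on its initial condition with the matching exponential prefactor $e^{-\kappa_y(\hat{t}_n - s)}$; this is where the exponential stability of the linearization of $\dot{y} = g(\lambda(y),y)$ at $y^*$ enters, and it can either be quoted from \cite{gugan} or re-derived by differentiating (\ref{eq:Phi_diff_eq_y}) in $y_0$ and applying Gronwall. Everything after that is the same integrate-over-a-step, bound-the-geometric-sum bookkeeping already used for $A_n$, $B_n$, $C_n$, $D_n$.
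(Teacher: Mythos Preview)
Your proposal is correct and is precisely the argument the paper has in mind: the paper's own proof is just the one-line citation ``The proof exactly follows that of Lemma~5.7, \cite{gugan},'' and what you have written is exactly that lemma's proof transcribed to the $y$-iteration with step sizes $b_k$ and decay rate $\kappa_y$. The key ingredient you flag---Lipschitz dependence of $\Phi_y(\hat{t}_n,s,\cdot)$ with the exponential prefactor $e^{-\kappa_y(\hat{t}_n-s)}$---is indeed the content of the cited lemma, and the remaining bookkeeping matches the pattern already used for $A_n,\dots,D_n$.
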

\begin{proof}
The proof exactly follows that of Lemma $5.7$, \cite{gugan}
\end{proof}
\hfill\break
Combining, we have the bound: on $G_n$,
\begin{align}
||\overline{y}(\hat{t}_n) &- y(\hat{t}_n)|| \leq K  \bigg[ ||\hat{S}_n^{(2)}|| + \sup_{{n_0} \leq k \leq n-1}||S_k^{(1)}||  + \nonumber \\
& \sup_{{n_0} \leq k \leq n-1} b_k + \sup_{{n_0} \leq k \leq n-1} b_k ||M_{k+1}^{(2)}|| + \nonumber \\
 &\sup_{{n_0} \leq k \leq n-1} a_k + \sup_{{n_0} \leq k \leq m-1} a_k ||M_{k+1}^{(1)}||^2 + \nonumber \\ 
&\sup_{{n_0} \leq k \leq n-1} \epsilon_k + \sup_{{n_0} \leq k \leq n-1} \epsilon_k || M_{k+1}^{(2)} ||^2 + \nonumber \\
&\sup_{{n_0} \leq k \leq n-1} b_k ||M_{k+1}^{(2)}|| + \sup_{{n_0} \leq k \leq n-1} b_k ||M_{k+1}^{(2)}||^2 + \nonumber \\
& e^{-\kappa_y (\hat{t}_n-\hat{t}_{n_0})} ||\overline{y}(\hat{t}_{n_0}) - y(\hat{t}_{n_0})|| + \sup_{{n_0} \leq k \leq n-1} \epsilon_k H_{n_0} \bigg]. \nonumber
\end{align}
Using $||x|| \leq 1 + ||x||^2$ and that $b_k \leq \epsilon_k \ \forall k$ gives us:
\begin{align}
||\overline{y}&(\hat{t}_n) - y(\hat{t}_n)|| \leq K \bigg[ ||\hat{S}_n^{(2)}|| + \sup_{{n_0} \leq k \leq n-1}||S_k^{(1)}||  \nonumber \\
&+\sup_{{n_0} \leq k \leq n-1} a_k + \sup_{{n_0} \leq k \leq m-1} a_k ||M_{k+1}^{(1)}||^2  \nonumber \\ 
&+ \sup_{{n_0} \leq k \leq n-1} \epsilon_k + \sup_{{n_0} \leq k \leq n-1} \epsilon_k || M_{k+1}^{(2)} ||^2  \nonumber \\
&+ e^{-\kappa_y (\hat{t}_n-\hat{t}_{n_0})} ||\overline{y}(\hat{t}_{n_0}) - y(\hat{t}_{n_0})|| + \sup_{{n_0} \leq k \leq n-1} \epsilon_k H_{n_0}\bigg]. \label{numbertwo}
\end{align}

\section{Concentration bounds}

\subsection{Concentration of $|| x_n - z_n ||$}

For $\epsilon > 0$. Let $N$ be such that for all $n \geq N$, we have $a_n \leq \frac{\epsilon}{8K}, \epsilon_n \leq \frac{\epsilon}{8K}$ $\forall n \geq N$. For $n_0 \geq N$ and $K$ as in (\ref{numberone}), let $T$ be such that:
\begin{align}
e^{-\kappa_x (\tilde{t}_n-\tilde{t}_{n_0})} H_{n_0} \leq \frac{\epsilon}{8K} \: \: \forall \: n \geq n_0 + T. \nonumber
\end{align}
From (\ref{numberone}) and Lemma $3.1$ \cite{gugan}, we have
\begin{align}
\mathbb{P}( ||x_n - z_n|| \leq \epsilon \: &\forall n \geq n_0 + T + 1 | x_{n_0}, z_{n_0} \in B )  \nonumber \\
&\geq  1 - \mathbb{P}_0\bigg(  \bigcup_{n = n_0}^{\infty}\{G_n, ||S_n^{(1)}|| > \frac{\epsilon}{8K} \} \nonumber \\
&\quad \cup \:  \bigcup_{n = n_0}^{\infty}\{G_n, a_n ||M_{n+1}^{(1)}||^2 > \frac{\epsilon}{8K} \} \nonumber \\
&\quad \cup \: \bigcup_{n = n_0}^{\infty}\{G_n, \epsilon_n ||M_{n+1}^{(2)}||^2 > \frac{\epsilon}{8K} \}  \bigg), \nonumber
\end{align}
where $\mathbb{P}_0(\cdot)$ denotes the conditional probability given $ x_{n_0}, z_{n_0} \in B$. Using the union bound, we have
\begin{align}
\mathbb{P}( ||x_n - &z_n|| \leq \epsilon \: \forall n \geq n_0 + T + 1 | x_{n_0}, z_{n_0} \in B )  \nonumber \\
&\geq  1 - \sum_{n=n_0}^{\infty}\mathbb{P}_0(G_n, ||S_n^{(1)}|| > \frac{\epsilon}{8K})  \nonumber \\
& \quad - \sum_{n=n_0}^{\infty}\mathbb{P}_0(G_n, a_n ||M_{n+1}^{(1)}||^2 > \frac{\epsilon}{8K}) \nonumber \\
& \quad - \sum_{n=n_0}^{\infty}\mathbb{P}_0(G_n, \epsilon_n ||M_{n+1}^{(2)}||^2 > \frac{\epsilon}{8K}). \nonumber
\end{align}
From Theorem $6.2$, \cite{gugan}, we have: for some $K_1 > 0,$
\begin{align}
\sum_{n=n_0}^{\infty}\mathbb{P}_0(G_n, a_n ||M_{n+1}^{(1)}||^2 > \frac{\epsilon}{8K}) &\leq K_1 \sum_{n=n_0}^{\infty}\exp \bigg(- \frac{K^2 \sqrt{\epsilon}}{\sqrt{a_n}} \bigg), \nonumber \\
\sum_{n=n_0}^{\infty}\mathbb{P}_0(G_n, \epsilon_n ||M_{n+1}^{(2)}||^2 > \frac{\epsilon}{8K}) &\leq K_1 \sum_{n=n_0}^{\infty}\exp \bigg(- \frac{K^2 \sqrt{\epsilon}}{\sqrt{\epsilon_n}} \bigg). \nonumber
\end{align}
From Theorem $6.3$, \cite{gugan}, for some $K_2, K_3 > 0$,  $\epsilon > 0$,
\begin{align}
\sum_{n=n_0}^{\infty}&\mathbb{P}_0(G_n, ||S_n^{(1)}|| > \frac{\epsilon}{8K}) \leq K^3 \sum_{n = n_0}^{\infty} \exp \bigg(- \frac{K_3 \epsilon^2}{\beta_n} \bigg). \nonumber 
\end{align}
%and for $\epsilon > 1$,
%\begin{align}
%\sum_{n=n_0}^{\infty}&\mathbb{P}_0(G_n, ||S_n^{(1)}|| > \frac{\epsilon}{8K}) \leq K_2 \sum_{n = n_0}^{\infty} \exp \bigg(- \frac{K_3 \epsilon}{\beta_n} \bigg) %\nonumber
%\end{align}
where  $\beta_n := \max_{n_0 \leq k \leq n-1} \bigg[ e^{-\kappa_x \sum_{i=k+1}^{n-1} a_i}a_k \bigg]$. 
Combining these results, we have the following theorem- \\
\begin{theorem}
For $n_0, T$ defined above, we have the following concentration bound for a suitable $C_1 > 0$. 
For $\epsilon \leq 1$,
\begin{align}
&\mathbb{P}( ||x_n - z_n|| \leq \epsilon \: \forall n \geq n_0 + T + 1 | x_{n_0}, z_{n_0} \in B )  \nonumber \\
&\geq 1 - \sum_{n = n_0}^{\infty} C_1 \exp \bigg(- \frac{C_2 \sqrt{\epsilon}}{\sqrt{a_n}} \bigg)  \nonumber \\
& \quad - \sum_{n = n_0}^{\infty} C_1 \exp \bigg(- \frac{C_2 \sqrt{\epsilon}}{\sqrt{\epsilon_n}} \bigg)- \sum_{n = n_0}^{\infty} C_1 \exp \bigg(- \frac{C_2 \epsilon^2}{\beta_n} \bigg). \nonumber
\end{align}
For $\epsilon > 1$,
\begin{align}
&\mathbb{P}( ||x_n - z_n|| \leq \epsilon \: \forall n \geq n_0 + T + 1 | x_{n_0}, z_{n_0} \in B )  \nonumber \\
&\geq 1 - \sum_{n = n_0}^{\infty} C_1 \exp \bigg(- \frac{C_2 \sqrt{\epsilon}}{\sqrt{a_n}} \bigg)  \nonumber \\
& \quad - \sum_{n = n_0}^{\infty} C_1 \exp \bigg(- \frac{C_2 \sqrt{\epsilon}}{\sqrt{\epsilon_n}} \bigg) - \sum_{n = n_0}^{\infty} C_1 \exp \bigg(- \frac{C_2 \epsilon}{\beta_n} \bigg). \nonumber
\end{align}
\end{theorem}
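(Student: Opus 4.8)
The plan is to assemble the pieces already in hand: the deterministic bound (\ref{numberone}), valid on the event $G_n$, together with the three noise estimates for single time scale recursions in \cite{gugan}. Fix $\epsilon\in(0,1]$ first (the case $\epsilon>1$ differs only in which regime of the martingale tail bound is used). Choose $N$ so that $a_n\le \epsilon/(8K)$ and $\epsilon_n\le\epsilon/(8K)$ for all $n\ge N$, take $n_0\ge N$, and pick $T$ so that $e^{-\kappa_x(\tilde t_n-\tilde t_{n_0})}H_{n_0}\le \epsilon/(8K)$ for $n\ge n_0+T$. With these choices the three ``deterministic'' summands on the right of (\ref{numberone}) are each at most $\epsilon/8$ on $G_n$, so for $n\ge n_0+T+1$ the event $\{\|x_n-z_n\|>\epsilon\}\cap G_n$ forces one of $\|S_n^{(1)}\|$, $a_n\|M_{n+1}^{(1)}\|^2$, $\epsilon_n\|M_{n+1}^{(2)}\|^2$ to exceed $\epsilon/(8K)$ for some $n\ge n_0$. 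Lemma~3.1 of \cite{gugan} converts this implication into the lower bound
\[
\mathbb{P}\big(\|x_n-z_n\|\le\epsilon\ \forall n\ge n_0+T+1\mid x_{n_0},z_{n_0}\in B\big)\ \ge\ 1-\Sigma_1-\Sigma_2-\Sigma_3,
\]
where $\Sigma_1=\sum_{n\ge n_0}\mathbb{P}_0(G_n,\|S_n^{(1)}\|>\epsilon/(8K))$, $\Sigma_2=\sum_{n\ge n_0}\mathbb{P}_0(G_n,a_n\|M_{n+1}^{(1)}\|^2>\epsilon/(8K))$ and $\Sigma_3=\sum_{n\ge n_0}\mathbb{P}_0(G_n,\epsilon_n\|M_{n+1}^{(2)}\|^2>\epsilon/(8K))$; the union bound over $n$ is what produces the sums.

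For $\Sigma_2$ and $\Sigma_3$ I would rewrite $a_n\|M_{n+1}^{(1)}\|^2>\epsilon/(8K)$ as $\|M_{n+1}^{(1)}\|>\sqrt{\epsilon/(8Ka_n)}$ and feed this into the conditional sub-exponential tail hypothesis on $M_{n+1}^{(i)}$. The key point is that on $G_n$ the interpolation $\overline{x}(\cdot)$ stays in the bounded set $V^r$, hence $x_n$ is bounded there and the data-dependent constants $c_1(x_n),c_2(x_n)$ may be replaced by uniform ones; this is exactly the content of Theorem~6.2 of \cite{gugan}, giving $\Sigma_2\le K_1\sum_{n\ge n_0}\exp(-C_2\sqrt\epsilon/\sqrt{a_n})$ and, identically, $\Sigma_3\le K_1\sum_{n\ge n_0}\exp(-C_2\sqrt\epsilon/\sqrt{\epsilon_n})$. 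For $\Sigma_1$, note that $S_n^{(1)}=\sum_{k}\big(\int_{\tilde t_k}^{\tilde t_{k+1}}\Phi_x(\tilde t_n,s,\overline{x}(\tilde t_k),y(\tilde t_k))\,ds\big)M_{k+1}^{(1)}$ is a martingale in $n$ whose increments have predictable ``variance proxy'' controlled by $\beta_n=\max_{n_0\le k\le n-1}[e^{-\kappa_x\sum_{i=k+1}^{n-1}a_i}a_k]$, because the fundamental-matrix weights are evaluated at the left endpoints $\tilde t_k$, hence are $\mathcal F_k$-measurable, and are bounded using $\|\Phi_x\|\le Ke^{-\kappa_x(t-s)}$ on $G_n$. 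Applying the martingale concentration inequality of \cite{gugan} (its Theorem~6.3) on $G_n$ then yields $\Sigma_1\le K_3\sum_{n\ge n_0}\exp(-C_2\epsilon^2/\beta_n)$ when $\epsilon\le1$, and the analogous bound with $\epsilon$ in place of $\epsilon^2$ when $\epsilon>1$, the switch of regime being the usual consequence of sub-exponential rather than sub-Gaussian noise. Collecting $\Sigma_1,\Sigma_2,\Sigma_3$ and taking $C_1,C_2$ to be the appropriate maximum and minimum of the constants above gives both displayed inequalities.

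The main obstacle is not any individual estimate --- all three are quoted from \cite{gugan} --- but the verifications that make the reduction legitimate: that conditioning on $G_n$ really does render the noise constants $c_1(\cdot),c_2(\cdot)$ uniform, that $S_n^{(1)}$ as defined here satisfies the hypotheses of the martingale inequality (predictability of the integrated weights), and that Lemma~3.1 of \cite{gugan} applies verbatim with the pair $(x_n,z_n)$ replacing the single time scale iterate. The only genuinely new bookkeeping relative to \cite{gugan} is that the extra $\epsilon_n$-weighted martingale term coming from the $z$-recursion (\ref{eq:stoc_z}) must be tracked separately; this is precisely what produces the additional sum $\sum_{n\ge n_0}C_1\exp(-C_2\sqrt\epsilon/\sqrt{\epsilon_n})$ alongside the $\sqrt{a_n}$ sum familiar from the single time scale analysis.
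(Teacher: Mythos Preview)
Your proposal is correct and follows essentially the same route as the paper: the same choice of $N,n_0,T$, the reduction via (\ref{numberone}) and Lemma~3.1 of \cite{gugan} to the three sums $\Sigma_1,\Sigma_2,\Sigma_3$, and the invocation of Theorems~6.2 and 6.3 of \cite{gugan} for the martingale-difference tails and for $S_n^{(1)}$ respectively. Your additional remarks on predictability of the $\Phi_x$-weights and uniformity of $c_1,c_2$ on $G_n$ are correct justifications that the paper leaves implicit.
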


\subsection{Concentration of $|| y_n - y(t_n)|| $}

Let $N, T$ be as before for $K$ as in (\ref{numbertwo}), and with
\begin{align}
e^{-\kappa_y (\hat{t}_n-\hat{t}_{n_0})}( ||\overline{y}(\hat{t}_{n_0}) &- y(\hat{t}_{n_0})||) \leq \frac{\epsilon}{8K}, \: \: \forall \: n \geq n_0 + T. \nonumber
\end{align}
Using (\ref{numbertwo}) and Lemma $3.1$ \cite{gugan} we have
\begin{align}
\mathbb{P}( ||y_n - &y(\hat{t}_n)|| \leq \ \epsilon \: \forall n \geq n_0 + T + 1 |x_{n_0}, y_{n_0}, z_{n_0} \in B) \nonumber \\
&\geq  1 - \mathbb{P}_1\bigg(  \bigcup_{n = n_0}^{\infty}\{G_n, ||S_n^{(1)}|| > \frac{\epsilon}{8K} \}  \nonumber \\
& \quad \cup \: \bigcup_{n = n_0}^{\infty}\{G_n, ||\hat{S}_n^{(2)}|| > \frac{\epsilon}{8K} \} \nonumber \\
& \quad \cup \:  \bigcup_{n = n_0}^{\infty}\{G_n, a_n ||M_{n+1}^{(1)}||^2 > \frac{\epsilon}{8K} \} \nonumber \\
&  \quad \cup \: \bigcup_{n = n_0}^{\infty}\{G_n, \epsilon_n ||M_{n+1}^{(2)}||^2 > \frac{\epsilon}{8K} \}  \bigg) \nonumber
\end{align}
where we use $\mathbb{P}_1(\cdot)$ to denote the conditional probability given $x_{n_0}, y_{n_0}, z_{n_0} \in B$. Using the union bound, 
\begin{align}
\mathbb{P}( ||y_n - &y(\hat{t}_n)|| \leq \ \epsilon \: \forall n \geq n_0 + T + 1 | x_{n_0}, y_{n_0}, z_{n_0} \in B) \nonumber \\
&\geq  1 - \sum_{n=n_0}^{\infty}\mathbb{P}_1(G_n, ||S_n^{(1)}|| > \frac{\epsilon}{8K}) \nonumber \\
& \quad -\sum_{n=n_0}^{\infty}\mathbb{P}_1(G_n, ||\hat{S}_n^{(2)}|| > \frac{\epsilon}{8K})  \nonumber \\
& \quad- \sum_{n=n_0}^{\infty}\mathbb{P}_1(G_n, a_n ||M_{n+1}^{(1)}||^2 > \frac{\epsilon}{8K}) \nonumber \\
& \quad - \sum_{n=n_0}^{\infty}\mathbb{P}_1(G_n, \epsilon_n ||M_{n+1}^{(2)}||^2 > \frac{\epsilon}{8K}). \nonumber
\end{align}
From Theorem $6.2$, \cite{gugan}, we have
\begin{align}
\sum_{n=n_0}^{\infty}\mathbb{P}_1(G_n, a_n ||M_{n+1}^{(1)}||^2 > \frac{\epsilon}{8K}) &\leq K_1' \sum_{n=n_0}^{\infty}\exp \bigg(- \frac{K_2' \sqrt{\epsilon}}{\sqrt{a_n}} \bigg), \nonumber \\
\sum_{n=n_0}^{\infty}\mathbb{P}_1(G_n, \epsilon_n ||M_{n+1}^{(2)}||^2 > \frac{\epsilon}{8K}) &\leq K_1' \sum_{n=n_0}^{\infty}\exp \bigg(- \frac{K_2' \sqrt{\epsilon}}{\sqrt{\epsilon_n}} \bigg). \nonumber
\end{align}
\\ \\
From Theorem $6.3$, \cite{gugan},  for suitable constants $\{K_i'\}$ and $\gamma_n := \max_{n_0 \leq k \leq n-1} \bigg[ e^{-\kappa_y \sum_{i=k+1}^{n-1} b_i}b_k \bigg]$ : for $\epsilon \leq 1$,
\begin{align}
\sum_{n=n_0}^{\infty}\mathbb{P}_1(G_n, ||S_n^{(1)}|| > \frac{\epsilon}{8K}) &\leq K_3' \sum_{n = n_0}^{\infty} \exp \bigg(- \frac{K_4' \epsilon^2}{\beta_n} \bigg), \nonumber \\
\sum_{n=n_0}^{\infty}\mathbb{P}_1(G_n, ||\hat{S}_n^{(2)}|| > \frac{\epsilon}{8K}) &\leq K_3' \sum_{n = n_0}^{\infty} \exp \bigg(- \frac{K_4' \epsilon^2}{\gamma_n} \bigg), \nonumber
\end{align}
and for $\epsilon > 1$,
\begin{align}
\sum_{n=n_0}^{\infty}\mathbb{P}_1(G_n, ||S_n^{(1)}|| > \frac{\epsilon}{8K}) &\leq K_3' \sum_{n = n_0}^{\infty} \exp \bigg(- \frac{K_4' \epsilon}{\beta_n} \bigg), \nonumber \\
\sum_{n=n_0}^{\infty}\mathbb{P}_1(G_n, ||\hat{S}_n^{(2)}|| > \frac{\epsilon}{8K}) &\leq K_3' \sum_{n = n_0}^{\infty} \exp \bigg(- \frac{K_4' \epsilon}{\gamma_n} \bigg). \nonumber
\end{align}
%Here, 
%\begin{align}
%\gamma_n := \max_{n_0 \leq k \leq n-1} \bigg[ e^{-\kappa_y \sum_{i=k+1}^{n-1} b_i}b_k \bigg]. \nonumber
%\end{align}
Combining these results, we have the following theorem: \\
\begin{theorem}
For $n_0, T$ as above and suitable constants $C_1, C_2 > 0$, we have: 
For $\epsilon \leq 1$,
\begin{align}
&\mathbb{P}( ||y_n - y(\hat{t}_n)|| \leq \ \epsilon \: \forall n \geq n_0 + T + 1 | x_{n_0}, y_{n_0}, z_{n_0} \in B) \geq \nonumber \\
& 1 - \sum_{n = n_0}^{\infty} C_1 \exp \bigg(- \frac{C_2 \sqrt{\epsilon}}{\sqrt{a_n}} \bigg)  - \sum_{n = n_0}^{\infty} C_1 \exp \bigg(- \frac{C_2 \sqrt{\epsilon}}{\sqrt{\epsilon_n}} \bigg) \nonumber \\
& - \sum_{n = n_0}^{\infty} C_1 \exp \bigg(- \frac{C_2 \epsilon^2}{\beta_n} \bigg) - \sum_{n = n_0}^{\infty} C_1 \exp \bigg(- \frac{C_2 \epsilon^2}{\gamma_n} \bigg). \nonumber 
\end{align}
For $\epsilon > 1$,
\begin{align}
&\mathbb{P}( ||y_n - y(\hat{t}_n)|| \leq \ \epsilon \: \forall n \geq n_0 + T + 1 | x_{n_0}, y_{n_0}, z_{n_0} \in B) \geq \nonumber \\
&1 - \sum_{n = n_0}^{\infty} C_1 \exp \bigg(- \frac{C_2 \sqrt{\epsilon}}{\sqrt{a_n}} \bigg)  - \sum_{n = n_0}^{\infty} C_1 \exp \bigg(- \frac{C_2 \sqrt{\epsilon}}{\sqrt{\epsilon_n}} \bigg) \nonumber \\
& - \sum_{n = n_0}^{\infty} C_1 \exp \bigg(- \frac{C_2 \epsilon}{\beta_n} \bigg) - \sum_{n = n_0}^{\infty} C_1 \exp \bigg(- \frac{C_2 \epsilon}{\gamma_n} \bigg). \nonumber 
\end{align}

\end{theorem}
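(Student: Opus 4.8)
\noindent\emph{Proof plan.} The statement is a consolidation of the estimates assembled above, so the plan is mainly careful bookkeeping rather than a new argument. First I would fix $\epsilon>0$ and start from the deviation bound (\ref{numbertwo}), valid on the event $G_n$ (in its $\{y_k\}$ version). Its right-hand side is $K$ times eight terms, which I would split into (i) the ``deterministic or decaying'' terms $\sup_{n_0\le k\le n-1}a_k$, $\sup_{n_0\le k\le n-1}\epsilon_k$, $e^{-\kappa_y(\hat t_n-\hat t_{n_0})}\|\overline y(\hat t_{n_0})-y(\hat t_{n_0})\|$ and $\sup_{n_0\le k\le n-1}\epsilon_k H_{n_0}$, and (ii) the four genuinely random terms $\|\hat S_n^{(2)}\|$, $\sup_{n_0\le k\le n-1}\|S_k^{(1)}\|$, $\sup_{n_0\le k\le n-1}a_k\|M_{k+1}^{(1)}\|^2$ and $\sup_{n_0\le k\le n-1}\epsilon_k\|M_{k+1}^{(2)}\|^2$. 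I would choose $N$ large enough that $a_n\le\epsilon/(8K)$ and $\epsilon_n\le\epsilon/(8K)$ for $n\ge N$, enlarging $N$ so that the term $\sup_{k\ge N}\epsilon_k\,H_{n_0}$, which is controlled since $H_{n_0}$ is bounded on $\{x_{n_0},z_{n_0}\in B\}$, also stays below $\epsilon/(8K)$; then I would pick $n_0\ge N$ and $T$ so that $e^{-\kappa_y(\hat t_n-\hat t_{n_0})}\|\overline y(\hat t_{n_0})-y(\hat t_{n_0})\|\le\epsilon/(8K)$ for $n\ge n_0+T$. With these choices, for $n\ge n_0+T+1$ every term in group (i) is $\le\epsilon/(8K)$, so $\{G_n,\ \|y_n-y(\hat t_n)\|>\epsilon\}$ is contained in the union of the four events in which a group-(ii) term exceeds $\epsilon/(8K)$.

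Next I would take a union over $n\ge n_0$ and over these four families. Because $G_n$ is nonincreasing in $n$, the supremum over $k$ inside $\|S_k^{(1)}\|$, $a_k\|M_{k+1}^{(1)}\|^2$ and $\epsilon_k\|M_{k+1}^{(2)}\|^2$ can be pulled out via $\bigcup_n\{G_n,\sup_{k\le n-1}(\cdot)_k>c\}\subseteq\bigcup_k\{G_{k+1},\ (\cdot)_k>c\}$, so after re-indexing the bound becomes $1$ minus the four series $\sum_{n\ge n_0}\mathbb{P}_1(G_n,\|\hat S_n^{(2)}\|>\epsilon/(8K))$, $\sum_{n\ge n_0}\mathbb{P}_1(G_n,\|S_n^{(1)}\|>\epsilon/(8K))$, $\sum_{n\ge n_0}\mathbb{P}_1(G_n,a_n\|M_{n+1}^{(1)}\|^2>\epsilon/(8K))$ and $\sum_{n\ge n_0}\mathbb{P}_1(G_n,\epsilon_n\|M_{n+1}^{(2)}\|^2>\epsilon/(8K))$, exactly as displayed just before the theorem. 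The passage from the unconditional bad event to the conditional probability $\mathbb{P}_1(\cdot)$ given $x_{n_0},y_{n_0},z_{n_0}\in B$, retaining the localization to $G_n$, is the content of Lemma~3.1 of \cite{gugan}.

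Then I would invoke the two concentration inputs from \cite{gugan}. Theorem~6.2 of \cite{gugan}, applied with the stepsize sequence $a_n$ and then with $\epsilon_n$, bounds the two ``$\|M\|^2$'' series by $K_1'\sum_n\exp(-K_2'\sqrt\epsilon/\sqrt{a_n})$ and $K_1'\sum_n\exp(-K_2'\sqrt\epsilon/\sqrt{\epsilon_n})$. Theorem~6.3 of \cite{gugan} handles the martingale sums: $S_n^{(1)}$ is built from the first martingale differences with weights $\int_{\tilde t_k}^{\tilde t_{k+1}}\Phi_x$, so its concentration is governed by $\beta_n=\max_{n_0\le k\le n-1}[e^{-\kappa_x\sum_{i=k+1}^{n-1}a_i}a_k]$, whereas $\hat S_n^{(2)}$ lives on the slow grid $\{\hat t_k\}$ with weights $\int_{\hat t_k}^{\hat t_{k+1}}\Phi_y$, giving $\gamma_n=\max_{n_0\le k\le n-1}[e^{-\kappa_y\sum_{i=k+1}^{n-1}b_i}b_k]$; this is the structural reason both $\beta_n$ and $\gamma_n$ appear. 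The dichotomy $\epsilon\le 1$ versus $\epsilon>1$ is inherited directly from Theorem~6.3 (sub-Gaussian for small deviations, sub-exponential in the tail, since the $M^{(i)}$ themselves have exponential tails), producing the $\exp(-K_4'\epsilon^2/\beta_n)$, $\exp(-K_4'\epsilon^2/\gamma_n)$ terms in the first regime and $\exp(-K_4'\epsilon/\beta_n)$, $\exp(-K_4'\epsilon/\gamma_n)$ in the second. Collecting all prefactors into a single $C_1$ and setting $C_2=\min(K_2',K_4')$, summing the four contributions via the union bound yields the asserted lower bounds in both cases.

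The only real difficulty is one of verification rather than of proof: one must check that the recursion behind (\ref{numbertwo}) closes, i.e.\ that the estimate for $\hat B_n$ — which couples the $\{y_n\}$ deviation back to $\|x_k-z_k\|$ and hence to (\ref{numberone}) — does not reintroduce any term outside the four admissible types, and that the factor $H_{n_0}$ multiplying $\sup_k\epsilon_k$ stays bounded on $\{x_{n_0},y_{n_0},z_{n_0}\in B\}$ so that it can be absorbed into the choice of $N$. Both points are settled by the lemmas bounding $\hat A_n$, $\hat B_n$ and $\hat C_n-\hat S_n^{(2)}$ above together with the boundedness of $B$, so no further estimate is needed.
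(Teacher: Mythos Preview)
Your proposal is correct and follows essentially the same route as the paper: start from the deviation bound (\ref{numbertwo}), separate the deterministic/decaying terms from the four random ones, choose $N$ and $T$ to force each deterministic term below $\epsilon/(8K)$, then apply Lemma~3.1, the union bound, and Theorems~6.2 and~6.3 of \cite{gugan} to the resulting four series. Your write-up is in fact more careful than the paper's in two respects---you explicitly absorb the term $\sup_k\epsilon_k\,H_{n_0}$ into the choice of $N$ (which the paper glosses over), and you spell out why the $\sup_k$ inside the random terms can be unwound via the monotonicity of $G_n$---but the argument is the same.
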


\section{Conclusion}

We have derived a concentration bound for two time scale stochastic approximation that is valid for all time after a given time instant, extending the results of \cite{gugan} for the classical case. This was achieved by leveraging the stability properties of the limiting ODE by means of Alekseev's variation of constants formula. Future directions include extending this to the so called `Markov noise'. It also appears possible to exploit additional structure of specific stochastic approximation schemes to improve upon the bounds that have been derived here in a very general framework.

%%%%%%%%%%%%%%%%%%%%%%%%%%%%%%%%%%%%%%%%%%%%%%%%%%%%%%%%%%%%%%%%%%%%%%%%%%%%%%%%

%%%%%%%%%%%%%%%%%%%%%%%%%%%%%%%%%%%%%%%%%%%%%%%%%%%%%%%%%%%%%%%%%%%%%%%%%%%%%%%%

\begin{thebibliography}{99}

\bibitem{alekseev} Alekseev, Vladimir M. ``An estimate for the perturbations of the solutions of ordinary differential equations." \textit{Westnik Moskov Unn. Ser} 1 (1961): 28-36.

\bibitem{SPP} Bhatnagar, S., Prasad, H.\ L., and Prashanth, L.\ A., \textit{Stochastic Recursive Algorithms for Optimization: Simultaneous Perturbation Methods}, Lecture Notes in Control and Information Sciences No.\ 434, Springer, 2013.

\bibitem{aseem} Borkar, Aseem V., Vivek S. Borkar and Arpita Sinha. ``Aerial monitoring of slow
moving convoys using elliptical orbits.” \textit{submitted}, 2018.

\bibitem{book} Borkar, Vivek S. \textit{Stochastic approximation: a dynamical systems viewpoint}. Hindustan Publishing Agency, New Delhi and Cambridge University Press, Cambridge, UK, 2008.

\bibitem{two_time_scale} Borkar, Vivek S. ``Stochastic approximation with two time scales." \textit{Systems \& Control Letters} 29.5 (1997): 291-294.

\bibitem{brauer} Brauer, Fred. ``Perturbations of nonlinear systems of differential equations." \textit{Journal of Mathematical Analysis and Applications} 14.2 (1966): 198-206.

\bibitem{LeslieCollins} Collins, E.\ J., and David S.\ Leslie, ``Convergent multiple-timescales reinforcement learning algorithms for normal form games", \textit{Annals of Applied Probability} 13.4 (2003): 1231-1251.

\bibitem{gugan_2} Dalal, Gal, Balazs Szorenyi, Gugan Thoppe, and Shie Mannor. ``Concentration Bounds for Two Timescale Stochastic Approximation with Applications to Reinforcement Learning." \textit{arXiv preprint} arXiv:1703.05376 (2017).

\bibitem{conc_ineq} Liu, Q., and Watbled F. ``Exponential inequalities for martingales and asymptotic properties of the free energy of directed polymers in a random environment." \textit{Stochastic Processes and their Applications} 119.10 (2009): 3101-3132.

\bibitem{KondaBorkar} Konda, Vijaymohan R., and Vivek S.\ Borkar, ``Actor-critic-type learning algorithms for Markov decision processes", \textit{SIAM Journal of Control and Optimization} 38.1 (1999), 94-123.

\bibitem{KondaTsitsiklis} Konda, Vijaymohan R., and John T.\ Tsitsiklis, ``On actor-critic algorithms",  \textit{SIAM Journal of Control and Optimization} 42.4 (2000), 1143-1166.

\bibitem{konda} Konda, Vijaymohan R., and John N.\ Tsitsiklis. ``Convergence rate of linear two-time-scale stochastic approximation." \textit{Annals of applied probability} (2004): 796-819.

\bibitem{Krasovskii} Krasovskii, N.\ N., \textit{Stability of motion}, Stanford Uni.\ Press, Palo Alto, CA, 1963.

\bibitem{bhumesh} Kumar, Bhumesh, Vivek S. Borkar, and Akhil Shetty. ''Bounds for Tracking Error in Constant Stepsize Stochastic Approximation." \textit{arXiv preprint arXiv:1802.07759} (2018).

\bibitem{gugan} Thoppe, Gugan, and Vivek S. Borkar. ``A Concentration Bound for Stochastic Approximation via Alekseev's Formula." \textit{arXiv preprint arXiv:1506.08657} (2015).


\end{thebibliography}
\end{document}